\documentclass[10pt,a4paper, reqno]{amsart}
\usepackage{mathrsfs}

\usepackage{amsmath,amsfonts,verbatim}
\usepackage{latexsym}
\usepackage{amssymb,leftidx}
\usepackage{extarrows}
\usepackage{overpic}
\usepackage{color}
\usepackage{epsfig}
\usepackage{subfigure}
\usepackage{tikz}
\usepackage{bbm}
\usepackage{tikz, caption}
\usepackage[font=small,labelfont=bf]{caption}

\usepackage[colorlinks=true,backref=page]{hyperref}
\hypersetup{urlcolor=blue, citecolor=blue}

\usepackage{amssymb}
\usepackage{mathrsfs}
\usepackage{amscd}
\usepackage{cite}

\newcommand\R{\mathbb{R}}

\usepackage{graphicx}
\usepackage{float}

\numberwithin{equation}{section}
\newtheorem{proposition}{Proposition}[section]
\newtheorem{definition}{Definition}[section]
\newtheorem{lemma}{Lemma}[section]
\newtheorem{theorem}{Theorem}[section]

\allowdisplaybreaks
\begin{document}
\title[$L^p$-estimates for partial inverse wave]{$L^p$-estimates for the wave equation with partial inverse-square potentials}

\author{Jialu Wang}
\address{School of Mathematics and Statistics, Zhengzhou University, Zhengzhou, Henan, China, 450001;} \email{wang\_jialu6@163.com}

\author{Chengbin Xu}
\address{School of Mathematics and Statistics, Qinghai Normal University, Xining, Qinghai, China, 810008; }
\email{xcbsph@163.com}

\author{Fang Zhang}
\address{Department of Mathematics, Beijing Institute of Technology, Beijing, China, 100081;} \email{zhangfang@bit.edu.cn}

\author{Junyong Zhang}
\address{Department of Mathematics, Beijing Institute of Technology, Beijing 100081}
\email{zhang\_junyong@bit.edu.cn}

\begin{abstract}
This paper investigates $L^p$-estimates for solutions to the wave equation perturbed by a scaling-critical partial inverse-square potential. We study a model in which the singularity of the potential appears only in a subset of the variables, corresponding to the Schr\"{o}dinger operator $\mathcal{H}_a = -\Delta_x - \Delta_y + a/|x|^2$ on $\mathbb{R}^{2+n}$. Using spectral analysis, we establish the $L^p$-boundedness of the wave propagator $(1+\sqrt{\mathcal{H}_a})^{-\gamma} e^{it\sqrt{\mathcal{H}_a}}$ for a range of exponents $\gamma$ and $p$ satisfying $|1/p -1/2| < \gamma/(n+1)$. The key ingredients are the spectral measure kernel of the partial inverse-square  operator $\mathcal{H}_a$ and the complex interpolation argument.
\end{abstract}
 \maketitle

\begin{center}
\begin{minipage}{120mm}
   { \small {{\bf Key Words:} $L^{p}$-estimates; Partial inverse-square potential; Wave equation.}
   }\\
   { \small {\bf AMS Classification:}
      { 42B37, 35Q40, 35Q41.}
      }
\end{minipage}
\end{center}

\section{Introduction}
This paper studies $L^p$-estimates for the solution to the wave equation associated with the self-adjoint Schr\"odinger operator $$H_V=-\Delta+V(x)$$ on $L^2(\R^d)$  with $d\geq 2$ where the operator  $\Delta$ is the usual Laplacian on $\R^d$ and the potential $V(x)$  is a real-valued function.
More precisely, we consider the following Cauchy problem for the wave equation
\begin{equation}\label{eq:wave}
\begin{cases}
\partial_{tt}u + H_{V}u = 0, & (t,x) \in \mathbb{R} \times \mathbb{R}^{d}, \\
u(0,x) = f(x), \quad \partial_{t}u(0,x) = g(x), & x \in \mathbb{R}^{d}.
\end{cases}
\end{equation}
The study of $L^p$-estimates for solutions to wave equations constitutes a well-established and active research program in harmonic analysis and partial differential equations. Its development dates back to the seminal works of Peral \cite{P} and Miyachi \cite{Mi1,Mi2,Mi3}, who determined the sharp range of $L^p$-boundedness for the wave propagator $\frac{\sin(t\sqrt{-\Delta})}{\sqrt{-\Delta}}$. \vspace{0.2cm}

When considering potential perturbations, the complexity of the problem increases significantly. Particularly, for potentials with scaling-critical properties, the inverse-square Hardy-type potential $V(x) = a|x|^{-2}$ stands out as a typical example that has attracted widespread attention, see \cite{BPSS, BPST, KMVZZ, MZZ, PST, VZ} and related references. However, studying wave equations associated with scaling-critical potential operators presents fundamental difficulties, as the spectral structure of the operator undergoes an essential change, rendering the Fourier transform inapplicable. Nevertheless, in the context of Strichartz and dispersive estimates, extensive work has been devoted to wave equations with electricmagnetic potentials. For decaying potentials, such as short-range potentials, perturbative methods are often applicable, allowing results from the free case to be extended to the perturbed setting. However, for long-range singular potentials like the inverse-square potential, perturbative approaches generally fail. Breakthrough contributions on the inverse-square operator were made by Burq et al. \cite{BPSS,BPST}, who established Strichartz estimates for both wave and Schr\"odinger equations with such critical potentials. This type of potential lies on the borderline of validity for dispersive estimates, as shown by the work of Duyckaerts \cite{D} and Goldberg-Vega-Visciglia \cite{GVV}.
Fanelli et al. \cite{FFFP, FZZ} systematically derived dispersive estimates for Schr\"odinger and wave equations with critical electromagnetic potential, while Gao et al. \cite{GWZZ, GYZZ} established uniform resolvent estimates, decay and Strichartz estimates in electromagnetic fields. \vspace{0.2cm}

In this paper, we consider the partial inverse-square Schr\"odinger operator $H_{V}=\mathcal{H}_{a}$ on $L^2(\R^{2+n})$
\begin{equation}\label{La}
\mathcal{H}_a = -\Delta_x - \Delta_y + \frac{a}{|x|^2}, \quad x \in \mathbb{R}^{2} \setminus \{0\}, \quad y \in \mathbb{R}^{n}, \quad a > 0,
\end{equation}
which arises naturally many-body quantum systems and differs from the classical inverse-square potential model. On the one hand, this partial inverse-square operator exhibits rich mathematical structure: the potential is more singular and the singularity is restricted to the $x$-variables, breaking full rotational symmetry and rendering many symmetry-based analytical techniques inapplicable. On the other hand, the physical significance of this operator also lies in its connection to singular two-body quantum systems: via a suitable coordinate transformation, the two-particle Hamiltonian
$$H_{V,2} = -\Delta_{x_1} - \Delta_{x_2} + a|x_1 - x_2|^{-2}
$$
can be mapped precisely to our operator $\mathcal{H}_a$ in $\mathbb{R}^4$ when $n=2$, which serves as a pivotal starting point for exploring wave propagation behaviors in many-body quantum systems involving singular interactions. Regarding this operator $\mathcal{H}_a$, the Strichartz estimates have been previously established in \cite{ZZ}, we now turn to investigate the $L^p$ estimates for the wave equation associated with the operator $\mathcal{H}_a$.\vspace{0.2cm}

Regarding other potentials, certain $L^p$-estimates have also been established. For instance, the case of the Hermite operator $-\Delta+|x|^2$ was addressed in \cite{NT}, and more general operators of the form $-\Delta+V$ were studied in \cite{Zhong}. Later, Jotsaroop and Thangavelu \cite{JT} showed the operator $\frac{\sin\sqrt{G}}{G} $, in which the Grushin operator $G=-\Delta-|x|^2\partial_t^2$, is bounded on $L^p(\R^{n+1})$ for every $p$ satisfying $|\frac{1}{p}-\frac{1}{2}|<\frac{1}{n+2}$, thereby achieving the $L^p$ estimate for the solution. Correspondingly, they proved $L^p \to L^2$ estimate for wave equation with the Grushin operator via employing the $L^p$ boundedness of Riesz transform in subsequent article \cite{TD}. The study of $L^p$-boundedness for operators with singular potentials can be traced back to the work of Cheeger and Taylor \cite{CT} on conical singularities, followed by contributions by Li and Lohou\'{e} \cite{L,LN} on conic manifolds. In particular, our recent work \cite{WZZZ} developed $L^p$-estimates for the 2D wave equation with scaling-critical magnetic potentials by combining pointwise estimates of the kernel of an analytic operator family with Stein's interpolation theory.\vspace{0.2cm}

However, the methodology established above cannot be directly transferred to the partial inverse-square operator $\mathcal{H}_a$, because the potential $V=a/|x|^{2}$ is radial only with respect to the first variable $x\in\mathbb{R}^2$ and independent of the last variable $y\in\mathbb{R}^n$ (no decaying in $y$-direction), thus the particular partial inverse-square operator $\mathcal{H}_a$ is more singular. Nevertheless, we explore the spectral measure derived from the distorted Fourier transform, which serves as an effective alternative to the classical Fourier transform method in the free case. Notably, in \cite{ZZ}, we investigated the dispersive behaviors of Schr\"odinger equations and wave equations with the operator $\mathcal{H}_a$ by establishing distorted Fourier transform and the spectral measure. Inspired by \cite{ZZ}, this paper aims to establish the $L^p$-regularity for the wave propagator $(1+\sqrt{\mathcal{H}_a})^{-\gamma} e^{it\sqrt{\mathcal{H}_a}}$.  Unlike Strichartz estimates, which primarily describe global behavior in mixed space-time norms, $L^p$ estimates reflect the propagation characteristics of spatial regularity. In contrast, the $L^p$ estimates of the partial inverse-square operator $\mathcal{H}_a$ requires us to proceed from microlocal analysis based on the spectral measure and its kernel representation. The key to studying the $L^p$ estimates of operator $\mathcal{H}_a$ lies in obtaining precise pointwise estimates of spectral measure, thereby reformulating the problem into the analysis of oscillatory integrals. \vspace{0.2cm}

We characterize the $L^p$ regularity of the solution to \eqref{eq:wave} by establishing the $L^p$-boundedness of an associated oscillatory integral operator. It is well-known that the solution can be expressed via the spectral calculus as:
\begin{equation}
u(t, \cdot) = \cos(t\sqrt{\mathcal{H}_{a}}) f(\cdot) + \frac{\sin(t\sqrt{\mathcal{H}_{a}})}{\sqrt{\mathcal{H}_{a}}} g(\cdot). \label{eq:solution_rep}
\end{equation}
Thus, taking the $L^p$-boundedness of operators of the form $(1+\sqrt{\mathcal{H}_{a}})^{-z} e^{it\sqrt{\mathcal{H}_{a}}}$ into account is crucial. Our main result is the following.

\begin{theorem} \label{thm:main}
Let $\mathcal{H}_{a}$ be in \eqref{La} and let $\gamma > 0$ and $1 \leq p \leq \infty$ satisfy $|\frac{1}{p} - \frac{1}{2}| < \frac{\gamma}{n+1}$. Then there exists a constant $C(p, \gamma) > 0$ such that for any $t > 0$ and all $f \in L^{p}(\mathbb{R}^{2+n}),$ we have
\begin{equation}
\|(1+\sqrt{\mathcal{H}_{a}})^{-\gamma} e^{it\sqrt{\mathcal{H}_{a}}} f\|_{L^{p}(\mathbb{R}^{2+n})} \leq C(p, \gamma)(1+t)^{\gamma} \|f\|_{L^{p}(\mathbb{R}^{2+n})}. \label{eq:main_estimate}
\end{equation}
\end{theorem}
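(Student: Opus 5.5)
We will run Stein's complex interpolation on the analytic family
\[
T_z=(1+\sqrt{\mathcal{H}_a})^{-z}e^{it\sqrt{\mathcal{H}_a}},\qquad z\in\mathbb{C},\ 0\le \operatorname{Re} z\le \tfrac{n+1}{2}+\epsilon ,
\]
together with duality, since $T_\gamma^*$ has the same form with $t\mapsto -t$. On the line $\operatorname{Re}z=0$ the spectral theorem gives
\[
\|T_z\|_{L^2\to L^2}\le 1,
\]
because $\mathcal{H}_a\ge 0$ for $a>0$. On the line $\operatorname{Re} z=\frac{n+1}{2}+\epsilon$ we aim for an $L^1\to L^1$ bound of size
\[
C(1+|\operatorname{Im} z|)^{N}e^{c|\operatorname{Im}z|}(1+t)^{\frac{n+1}{2}+\epsilon}.
\]
Interpolating at $\gamma=\theta(\frac{n+1}{2}+\epsilon)$ then gives $L^p$ boundedness with $\frac1p-\frac12=\frac{\theta}{2}=\frac{\gamma}{n+1+2\epsilon}$. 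Letting $\epsilon\to0$ covers the open range $|\frac1p-\frac12|<\frac{\gamma}{n+1}$ for $p\le 2$, with the constant $(1+t)^{\gamma}$. Duality gives $p\ge2$. For $p=1$ or $p=\infty$ the condition $\frac12<\frac{\gamma}{n+1}$ means $\gamma>\frac{n+1}{2}$, so the endpoint estimate applies directly.

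The core is therefore the $L^1$ (equivalently $L^\infty$) endpoint, which amounts to a uniform bound
\[
\sup_{(x',y')}\int_{\mathbb{R}^{2+n}}|K_z(t;x,y;x',y')|\,dx\,dy<\infty .
\]
\begin{itemize}
\item We first make a Littlewood--Paley decomposition $1=\phi_0(\lambda)+\sum_{k\ge1}\phi(2^{-k}\lambda)$ in the spectral variable $\lambda=\sqrt{\mathcal{H}_a}$.
\item We write each piece through the spectral measure, $\int_0^\infty \phi(2^{-k}\lambda)(1+\lambda)^{-z}e^{it\lambda}\,dE_{\sqrt{\mathcal{H}_a}}(\lambda)$, using the kernel of $dE$ obtained from the distorted Fourier transform of \cite{ZZ}. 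That kernel comes from the Hankel transform in $|x|$ with orders $\nu_k=\sqrt{k^2+a}$ in the angular variable of $\mathbb{R}^2$, combined with the Euclidean Fourier transform in $y$.
\item Following \cite{ZZ}, we split the kernel into a geometric part and a diffractive part. The geometric part behaves like the free spectral measure, $\lambda^{n+1}\sum_\pm e^{\pm i\lambda r}a_\pm(\lambda r)$ with $r$ the Euclidean distance, plus terms carrying the phase $e^{\pm i\lambda\sqrt{(|x|+|x'|)^2+|y-y'|^2}}$.
\item For each dyadic piece we integrate by parts in $\lambda$ against $e^{it\lambda}$. This shows the kernel is concentrated in the shell $|\mathrm{dist}-t|\lesssim 2^{-k}$, where it has size $2^{k(n+2)}2^{-k\frac{n+1}{2}}t^{-\frac{n+1}{2}}\,2^{-k\operatorname{Re}z}$, with rapid decay away from the shell.
\item Integrating over the shell, whose volume is about $t^{n+1}2^{-k}$, gives an $L^1$ norm of about $2^{k(\frac{n+1}{2}-\operatorname{Re}z)}(1+t)^{\frac{n+1}{2}}$, which is summable in $k$ exactly when $\operatorname{Re}z>\frac{n+1}{2}$.
\item The low-frequency piece $\phi_0$ is handled by the same computation without dyadic gain, giving the factor $(1+t)^{\frac{n+1}{2}}$.
\end{itemize}
Derivatives in $\lambda$ of $(1+\lambda)^{-z}$ produce only polynomial growth in $|\operatorname{Im}z|$, which is admissible for Stein interpolation.

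The main obstacle is the pointwise control of the spectral measure kernel. The potential does not decay in $y$, so there is no global rotational symmetry, and the Hankel sum over $\nu_k$ does not collapse to a closed form. I would first use the representation from \cite{ZZ} in which the $x$-part is written through the Cheeger--Taylor type formula: a contour integral over $\cos\theta$ plus a diffractive integral over $s\in(0,\infty)$ involving $\cosh s$. The $y$-integral is then done explicitly as a Gaussian or Bessel-type integral, after which stationary phase is applied in the combined variables. The diffractive term produces the phase $\sqrt{|x|^2+|x'|^2+2|x||x'|\cosh s+|y-y'|^2}$. This phase must still be shown to yield the $\lambda^{\frac{n+1}{2}}$-type decay, uniformly in $s$ and with integrable weights in $s$. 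If that uniform bound fails, the fallback is a weighted estimate: separate the regions $|x|\lesssim 2^{-k}$ and $|x'|\lesssim 2^{-k}$, where the singular factor $|x|^{\nu_0-1}$ behavior is integrable because $\nu_0=\sqrt a>0$.
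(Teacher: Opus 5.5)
Your skeleton matches the paper's. You use Stein interpolation between the unitary bound on $\Re z=0$ and an $L^1$ bound on $\Re z=\frac{n+1}{2}+\epsilon$, and you get the latter from the representation in Proposition~\ref{prop:spect}, a dyadic decomposition in $\lambda$, and integration by parts against the phase.

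The main difference is the handling of $t$, and here your route is the more complete one.
\begin{itemize}
\item You keep $t$ in the family and interpolate the endpoint constant $(1+t)^{\frac{n+1}{2}+\epsilon}$ directly into $(1+t)^{\gamma}$.
\item The paper proves the endpoint only at $t=1$. It then invokes a scaling argument that, as written, presupposes exactly the $t$-dependent endpoint bound you propose to prove.
\item A rigorous version of the paper's step would use the dilation homogeneity of $\mathcal{H}_a$ to move $t$ into the symbol $(1+\lambda/t)^{-z}$. For $t\ge 1$ this symbol is $t^{\Re z}$ times a symbol of order $-\Re z$.
\end{itemize}

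Two smaller differences are matters of bookkeeping. You sum dyadic pieces through $L^1$ norms of shells. The paper sums them pointwise: it interpolates the one-step integration-by-parts bound $2^{-\varepsilon j}|1-|\vec n||^{-1}$ with the trivial bound $2^{(1-\varepsilon)j}$, obtaining the integrable singularity $|1-|\vec n||^{-(1-\iota)}$, which Lemma~\ref{lem:est} then integrates. The two are equivalent. The paper also handles the non-oscillatory piece $\beta_0(\sqrt{\mathcal{H}_a})$ by the Gaussian heat kernel bound, which is slightly simpler than redoing your computation at low frequency.

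What you call the main obstacle is already supplied. Proposition~\ref{prop:spect} gives the symbol bounds for $a_\pm(\lambda|\vec n^j_s|)$ uniformly in $s$. Lemma~\ref{lem:int_AB} gives $\int_0^\pi|A_a|\,ds+\int_0^\infty|B_a|\,ds\le C$ uniformly in the angles. So you need neither a re-derivation nor your fallback near $|x|\lesssim 2^{-k}$.

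The step that does need an argument is the one you treat as automatic: that the shell has volume $\approx t^{n+1}2^{-k}$ uniformly in $X'$.
\begin{itemize}
\item This holds for the free term. It also holds for the $B_a$ term, because $|\vec n^2_s|\ge |x|+|x'|$ confines $|x|,|x'|\lesssim t$ on the shell.
\item It fails for the $A_a$ term, with phase $|\vec n^1_s|$ and $s\in[|\theta-\theta'|,\pi]$, which your sketch of the kernel does not single out. For fixed $s$ and $|x'|\gg t$, the set $\{X:\ ||\vec n^1_s|-t|\lesssim 2^{-k}\}$ has measure $\approx |x'|\,t^{n}2^{-k}$. This is because $dX=r\,dr\,d\theta\,dy$ and $r\approx|x'|$ on that set.
\item Paired only with $\int_0^\pi|A_a|\,ds\le C$, this gives an $L^1$ bound growing like $|x'|$. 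So the endpoint estimate, as you have set it up, is not uniform in $X'$.
\end{itemize}

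The missing observation is that $|\vec n^1_s|^2\ge 2rr'(1-\cos s)$ forces $s\lesssim t/|x'|$ on the shell. After that, either of two facts cancels the factor $|x'|$:
\begin{itemize}
\item the lower limit $s\ge|\theta-\theta'|$ confines $\theta$ to an arc of length $\lesssim t/|x'|$; or
\item as in Case~2 of the proof of Lemma~\ref{lem:est}, one uses $|A_a(s;\theta,\theta')|\lesssim s$ near $s=0$.
\end{itemize}
Either way your count $t^{n+1}2^{-k}$ is restored, and the rest of your plan goes through.
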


In order to prove Theorem \ref{thm:main}, our proof strategy centers on the establishment of Proposition \ref{prop:tz_bounds}. The core of the argument lies in a detailed pointwise estimate of the integral kernel of analytic operator family $T_z$. Through a dual argument and complex interpolation, the problem reduces to establishing the $L^2$-boundedness of the operator family $T_{z} = (1 + \sqrt{\mathcal{H}_a})^{-z}e^{i\sqrt{\mathcal{H}_a}}$ at $\Re(z)=0$ and its $L^1$-boundedness at $\Re(z)=\frac{n+1}{2}+\varepsilon$. The $L^2$-boundedness follows directly from the unitarity of the operator, while the $L^1$ estimate constitutes the core difficulty of the proof. \vspace{0.2cm}

To prove the $L^1$ estimate, we conduct a more refined analysis through a detailed explicit expression of the spectral measure. Specifically, utilizing the known expression for the spectral measure established in \cite{ZZ}, we decompose the kernel function via a dyadic partition into low-frequency parts $(U,V)$ and a high-frequency part $G$. For the low-frequency part $V$, we utilize the Gaussian boundedness of the heat kernel associated with the operator $\mathcal{H}_a$. Also, we further decompose the high-frequency kernel $G$ into three components $G_1, G_2, G_3$, each of these terms originates from contributions corresponding to different geometric structures in the spectral measure. For the low-frequency part $U$ and each component of the high-frequency kernel $G$, we apply integration by parts to the relevant oscillatory integrals, ultimately proving their integrability and thereby establishing the crucial $L^1$-boundedness. In this process, the treatment of $G_2$ and $G_3$ presents a more significant technical challenge compared with the term $G_1$. The main difficulty stems from the phase function $\vec{n}_s^1$ and $\vec{n}_s^2$ appearing in the oscillatory integral of the spectral measure, which involves an angular correction induced by the singular potential. As a result, neither the classical method of stationary phase nor a simple scaling argument remains directly applicable. Fortunately, by a refined approach incorporating dyadic decomposition and suitable changes of variables, we are able to overcome this obstacle. \vspace{0.2cm}

The structure of this paper is as follows. In Section \ref{pre}, we review the necessary preliminaries on the spectral measure and heat kernel of $\mathcal{H}_a$, and introduces the analytic operators ${T_z}$. Section \ref{proof-prop} is devoted to the proof of Proposition \ref{prop:tz_bounds}, the cornerstone of Theorem \ref{thm:main}, which involves the detailed decomposition and estimation of the kernel. Section \ref{proof-thm:main} completes the proof of Theorem \ref{thm:main} via the complex interpolation theory and duality argument. Finally, the proof of the integrability for the singular integral arising from the high-frequency part of the kernel function is presented as a lemma in Appendix \ref{app}.

\section{Preliminaries}\label{pre}
This section collects the fundamental tools and known results that form the basis of our analysis. We begin by recalling the explicit expression for the spectral measure of the operator $\sqrt{\mathcal{H}_a}$, which is pivotal for our kernel estimates.

We first define the auxiliary functions that appear in the spectral measure formula. For $s > 0$ and angular variables $\theta, \theta'\in\mathbb{S}^1$, let
\begin{align}
A_{a}(s;\theta,\theta^{\prime}) &:= \big{(}\cos(\sqrt{a}s)-1\big{)} + sE_{1}(s;\theta,\theta^{\prime}) + s^{2}E_{2}(s;\theta,\theta^{\prime}), \label{eq:A_a_def} \\
B_{a}(s;\theta,\theta^{\prime}) &:= \sin(\sqrt{a}\,\pi)e^{-s\sqrt{a}} + D_{1}(s;\theta,\theta^{\prime}) + D_{2}(s;\theta,\theta^{\prime}), \label{eq:B_a_def}
\end{align}
where the components are given by the following series
\begin{align}
E_{1}(s;\theta,\theta^{\prime}) &:= \sum_{k\in\mathbb{Z}\setminus\{0\}}e^{ik(\theta-\theta^{\prime})}\left(-\frac{a}{2|k|}\right)\sin(|k|s), \label{eq:G1_def} \\
E_{2}(s;\theta,\theta^{\prime}) &:= \sum_{k\in\mathbb{Z}\setminus\{0\}}e^{ik(\theta-\theta^{\prime})}O(|k|^{-2}), \label{eq:G2_def} \\
D_{1}(s;\theta,\theta^{\prime}) &:= \sum_{k\in\mathbb{Z}\setminus\{0\}}e^{ik(\theta-\theta^{\prime})}\sin(\sqrt{k^{2}+a}\,\pi)\big{(}e^{-s\sqrt{k^{2}+a}}-e^{-s|k|}\big{)}, \label{eq:D1_def} \\
D_{2}(s;\theta,\theta^{\prime}) &:= \sum_{k\in\mathbb{Z}\setminus\{0\}}e^{ik(\theta-\theta^{\prime})}\big{(}\sin(\sqrt{k^{2}+a}\,\pi)-\sin(|k|\pi)\big{)}e^{-s|k|}. \label{eq:D2_def}
\end{align}

The following proposition provides the explicit form of the spectral measure kernel for $\sqrt{\mathcal{H}_a}$, which was established in prior work \cite{ZZ}.

\begin{proposition}\cite[Proposition 5.1]{ZZ} \label{prop:spect}
Let $\mathcal{H}_{a}$ be the operator given in \eqref{La}. Let $x=(r\cos\theta,r\sin\theta)\in\mathbb{R}^{2}$, $x^{\prime}=(r^{\prime}\cos\theta^{\prime},r^{\prime}\sin\theta^{\prime})\in\mathbb{R}^{2}$ and $y,y^{\prime}\in\mathbb{R}^{n}$. Let $dE_{\sqrt{\mathcal{H}_{a}}}(\lambda;X,X^{\prime})$ be the spectral measure kernel, where $X=(x,y)$ and $X'=(x',y')$. Then
\begin{equation}\label{kernel:sp}
\begin{split}
dE_{\sqrt{\mathcal{H}_{a}}}(\lambda;X,X^{\prime}) = &\frac{\lambda^{n+1}}{\pi^{n+1}}\sum_{\pm}a_{\pm}(\lambda |X-X^{\prime}|)e^{\pm i\lambda|X-X^{\prime}|} \\
&+ \frac{\lambda^{n+1}}{\pi^{n+1}}\int_{|\theta-\theta^{\prime}|}^{\pi}\sum_{\pm}a_{\pm}(\lambda|\vec{n}_{s}^{1}|)e^{\pm i\lambda|\vec{n}_{s}^{1}|} \times A_{a}(s;\theta,\theta^{\prime})\,ds \\
&- \frac{\lambda^{n+1}}{\pi^{n+1}}\int_{0}^{\infty}\sum_{\pm}a_{\pm}(\lambda|\vec{n}_{s}^{2}|)e^{\pm i\lambda|\vec{n}_{s}^{2}|} \times B_{a}(s,\theta,\theta^{\prime})\,ds,
\end{split}
\end{equation}
where the symbols $a_{\pm}(\lambda)$ satisfy the derivative bounds
\begin{equation}
|\partial_{\lambda}^{k}a_{\pm}(\lambda)| \leq C_{k}(1+\lambda)^{-\frac{n+1}{2}-k}, \quad \forall k \geq 0,
\end{equation}
the functions $A_{a}$ and $B_{a}$ are defined in \eqref{eq:A_a_def} and \eqref{eq:B_a_def}, and the vctors $\vec{n}_{s}^{1}$ and $\vec{n}_{s}^{2}$ are given by
\begin{equation}\label{equ:n}
\begin{split}
\vec{n}_{s}^{1} &= \left(r-r^{\prime},\ \sqrt{2rr^{\prime}(1-\cos s)},\ y-y^{\prime}\right), \\
\vec{n}_{s}^{2} &= \left(r+r^{\prime},\ \sqrt{2rr^{\prime}(\cosh s-1)},\ y-y^{\prime}\right).
\end{split}
\end{equation}
\end{proposition}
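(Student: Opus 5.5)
The plan is to separate variables, using the tensor structure $\mathcal{H}_a=L_a\otimes I+I\otimes(-\Delta_y)$ with $L_a=-\Delta_x+a|x|^{-2}$ on $\mathbb{R}^2$. In polar coordinates $L_a$ decomposes over angular modes $e^{ik\theta}$. On the $k$-th mode it acts as a Bessel operator of order $\nu_k=\sqrt{k^2+a}$, which the Hankel transform diagonalizes. Hence
\begin{equation*}
e^{-t\mathcal{H}_a}(X,X')=\frac{1}{2\pi}\sum_{k\in\mathbb{Z}}e^{ik(\theta-\theta')}\int_0^\infty e^{-t\rho^2}J_{\nu_k}(\rho r)J_{\nu_k}(\rho r')\,\rho\,d\rho\;\times\;(4\pi t)^{-n/2}e^{-|y-y'|^2/4t}.
\end{equation*}
Any function of $\mathcal{H}_a$, and in particular $dE_{\sqrt{\mathcal{H}_a}}(\lambda)$, can then be written with $e^{-t\rho^2}$ replaced by the corresponding spectral multiplier in $\rho^2+|\eta|^2$. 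Here $\eta$ is the Fourier variable dual to $y$.

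The first key step replaces the product $J_{\nu}(\rho r)J_{\nu}(\rho r')$ by an integral of a single $J_0$, using the classical Schläfli-type formula
\begin{equation*}
J_\nu(\rho r)J_\nu(\rho r')=\frac1\pi\int_0^\pi J_0\big(\rho\sqrt{r^2+r'^2-2rr'\cos s}\big)\cos(\nu s)\,ds-\frac{\sin(\nu\pi)}{\pi}\int_0^\infty J_0\big(\rho\sqrt{r^2+r'^2+2rr'\cosh s}\big)e^{-\nu s}\,ds.
\end{equation*}
Both arguments are exactly the lengths of the planar parts of $\vec n^1_s$ and $\vec n^2_s$ in \eqref{equ:n}. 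This is because $r^2+r'^2-2rr'\cos s=(r-r')^2+2rr'(1-\cos s)$, and similarly with $\cosh s$. Since $J_0(\rho|w|)\rho\,d\rho$ is the spectral measure of $-\Delta$ on $\mathbb{R}^2$ evaluated at a vector $w$, combining it with the $y$-Fourier integral reproduces the free spectral measure of $-\Delta$ on $\mathbb{R}^{2+n}$. That measure is evaluated at the vector $(w,y-y')$, whose length is $|\vec n^j_s|$. The free measure is well known to equal $\frac{\lambda^{n+1}}{\pi^{n+1}}\sum_\pm a_\pm(\lambda|Z|)e^{\pm i\lambda |Z|}$, with symbols $a_\pm$ satisfying the stated derivative bounds. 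These symbols come from the asymptotics of $J_{n/2}(z)z^{-n/2}$.

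It remains to sum over $k$ inside the $s$-integrals. For the $\cos s$ part, I expand
\begin{equation*}
\nu_k=|k|+\frac{a}{2|k|}+O(|k|^{-3}),
\end{equation*}
so that
\begin{equation*}
\cos(\nu_k s)=\cos(ks)-\frac{a s}{2|k|}\sin(|k|s)+s^2O(k^{-2}).
\end{equation*}
Summing $\sum_k e^{ik(\theta-\theta')}\cos(ks)$ over $s\in[0,\pi]$ produces $\pi\delta(s-|\theta-\theta'|)$ in the distributional sense. This delta term gives the free term $\lambda^{n+1}\sum_\pm a_\pm(\lambda|X-X'|)e^{\pm i\lambda|X-X'|}$, because at $s=|\theta-\theta'|$ the vector $\vec n^1_s$ has length $|X-X'|$. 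The $k=0$ mode supplies $\cos(\sqrt a s)-1$ after one subtracts its free counterpart, and the two correction terms supply $sE_1$ and $s^2E_2$. Together these give $A_a$. I expect a further manipulation to move the integration range to $[|\theta-\theta'|,\pi]$, using the delta-mass and the evenness of the summands. For the $\cosh s$ part, I write $\sin(\nu_k\pi)e^{-s\nu_k}$ as the free-like piece $\sin(|k|\pi)e^{-s|k|}$, which vanishes, plus the differences defining $D_1$ and $D_2$. The $k=0$ term is $\sin(\sqrt a\pi)e^{-s\sqrt a}$, so the sum is exactly $B_a$.

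The main obstacle is justifying the interchange of the $k$-sum, the $s$-integral and the $\rho,\eta$-integrals. The series for $E_1$ converges only conditionally, and the identity holds only in a distributional sense in $\lambda$. I would handle this by first proving the formula for the resolvent, or for $e^{-t\mathcal{H}_a}$, where everything converges absolutely. Only then would I pass to the spectral measure via Stone's formula, using that the remainder series $E_2$, $D_1$ and $D_2$ converge absolutely and uniformly in $s$ on the relevant ranges.
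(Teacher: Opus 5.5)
The paper does not prove this statement; it imports it from \cite{ZZ}. Your route is the standard one for such formulas: angular modes, the Hankel transform, the product formula for $J_\nu(\rho r)J_\nu(\rho r')$, reassembly into free kernels at the distances $|\vec n^j_s|$, and expansion of $\nu_k=\sqrt{k^2+a}$. Most of it checks out:
\begin{itemize}
\item The product formula is correct. Integrating both sides against $e^{-t\rho^2}\rho\,d\rho$ gives Schl\"afli's integral for $I_\nu(rr'/2t)$, and Laplace-transform uniqueness then gives the identity.
\item The delta mass correctly produces the free term.
\item The bookkeeping for $B_a$ is right, since $\sin(|k|\pi)=0$.
\end{itemize}
(As a side remark, if I carry your normalizations through, the $A_a$- and $B_a$-terms come with an extra factor $1/\pi$ relative to the free term. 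This is immaterial for how the proposition is used.)

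The genuine gap is the step you leave open: replacing $\int_0^\pi$ by $\int_{|\theta-\theta'|}^\pi$ in the $A_a$-term. Your suggested mechanism does not work. The delta mass is entirely consumed by the free term, and evenness of the summands gives nothing. What is actually needed is that the full sum
\[
A_a(s;\theta,\theta')=\sum_k e^{ik\phi}\big(\cos(\nu_k s)-\cos(|k|s)\big),\qquad \phi=\theta-\theta',\ |\phi|\in[0,\pi],
\]
vanishes for $0\le s<|\phi|$.

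This vanishing is invisible term by term. On that range, the sawtooth identity $\sum_{k\ge1}\sin(kx)/k=(\pi-x)/2$ gives $E_1=\frac a2 s$. Hence $(\cos(\sqrt a s)-1)+sE_1=\cos(\sqrt a s)-1+\frac{a s^2}{2}$, which is strictly positive for $s>0$. It cancels only against the whole of $s^2E_2$.

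The missing idea is finite propagation speed. The function $\frac1{2\pi}\sum_k e^{ik\phi}\cos(\sqrt{k^2+a}\,s)$ is the kernel of $\cos(s\sqrt{-\partial_\theta^2+a})$ on $\mathbb{S}^1$, so it is supported in $\{|\phi|\le s\}$. The free ($a=0$) kernel is the pure delta at $s=|\phi|$, so the difference vanishes for $s<|\phi|$. Concretely, periodize the one-dimensional Klein--Gordon kernel; no wrap-around occurs for $s\le\pi$. This gives, for $0\le s\le\pi$,
\[
A_a(s;\theta,\theta') = -\pi\sqrt a\, s\,\frac{J_1\big(\sqrt a\sqrt{s^2-\phi^2}\big)}{\sqrt{s^2-\phi^2}}\,\mathbf{1}_{\{s>|\phi|\}} .
\]
This yields the restricted range, and incidentally the bound $|A_a|\le Cs$.

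The restriction is not cosmetic. The paper uses $s\ge|\theta-\theta'|$ to get $|\vec n^1_s|\ge|X-X'|$ when bounding $U_2$. With $\int_0^\pi$ and no support information, that step would fail.
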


A crucial ingredient for handling the remainder terms in the spectral measure is the uniform integrability of $A_a$ and $B_a$, which is provided in the following lemma.

\begin{lemma}\cite{ZZ} \label{lem:int_AB}
Let $A_{a}(s;\theta,\theta^{\prime})$ and $B_{a}(s;\theta,\theta^{\prime})$ be defined as in \eqref{eq:A_a_def} and \eqref{eq:B_a_def}. Then there exists a constant $C$, independent of $\theta$ and $\theta'$, such that
\begin{equation}
\int_{0}^{\pi}|A_{a}(s;\theta,\theta^{\prime})|\,ds + \int_{0}^{\infty}|B_{a}(s;\theta,\theta^{\prime})|\,ds \leq C. \label{equ:int_AB}
\end{equation}
\end{lemma}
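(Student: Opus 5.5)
The lemma can be proved term by term. The point is that every oscillatory series in \eqref{eq:G1_def}--\eqref{eq:D2_def} either sums to an explicit bounded function or is dominated, after taking absolute values inside the sum, by a sum independent of $\theta-\theta'$. Write $\phi=\theta-\theta'$. We never need cancellation in $\phi$, only in $k$ (for $E_1$), so uniformity in $\theta,\theta'$ comes for free.

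\emph{The $A_a$ part.} On $[0,\pi]$ the term $\cos(\sqrt a s)-1$ is bounded by $2$, so its integral is at most $2\pi$.

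For $E_1$, pair $k$ with $-k$ to get
\begin{equation*}
E_1(s;\theta,\theta')=-a\sum_{k\ge1}\frac{\cos(k\phi)\sin(ks)}{k}=-\frac a2\sum_{k\ge1}\frac{\sin(k(s+\phi))+\sin(k(s-\phi))}{k}.
\end{equation*}
Each series is the sawtooth $\sum_{k\ge1}\sin(kx)/k=(\pi-x)/2$ for $x\in(0,2\pi)$, extended $2\pi$-periodically. Hence $|E_1|\le a\pi/2$ uniformly, and $\int_0^\pi s|E_1|\,ds\le C a$.

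For $E_2$, the coefficients are $O(|k|^{-2})$. I would check from the construction in \cite{ZZ} that this bound holds uniformly for $s\in[0,\pi]$; that is the only input not visible in the formulas above. Granting it, $|E_2|\le C\sum_{k\ne0}k^{-2}$, and $\int_0^\pi s^2|E_2|\,ds\le C$.

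\emph{The $B_a$ part.} The first term satisfies $\int_0^\infty e^{-s\sqrt a}\,ds=a^{-1/2}$.

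For $D_1$, use $0\le\sqrt{k^2+a}-|k|\le a/(2|k|)$ and $1-e^{-u}\le u$ to get
\begin{equation*}
\big|e^{-s\sqrt{k^2+a}}-e^{-s|k|}\big|\le e^{-s|k|}\Big(1-e^{-s(\sqrt{k^2+a}-|k|)}\Big)\le \frac{a s}{2|k|}e^{-s|k|}.
\end{equation*}
Summing over $k$,
\begin{equation*}
|D_1|\le a s\sum_{k\ge1}\frac{e^{-sk}}{k}=-a s\log(1-e^{-s}).
\end{equation*}
This function behaves like $a s\log(1/s)$ as $s\to0$ and like $a s e^{-s}$ as $s\to\infty$, so it is integrable on $(0,\infty)$.

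For $D_2$, write $\sin(\sqrt{k^2+a}\,\pi)=(-1)^k\sin\big(\pi(\sqrt{k^2+a}-|k|)\big)$. Since $\sin(|k|\pi)=0$, this gives
\begin{equation*}
\big|\sin(\sqrt{k^2+a}\,\pi)-\sin(|k|\pi)\big|\le \frac{\pi a}{2|k|},
\end{equation*}
and therefore $|D_2|\le -\pi a\log(1-e^{-s})$. This bound is integrable: it has only a logarithmic singularity at $0$ and decays like $e^{-s}$ at infinity.

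Adding all the pieces gives \eqref{equ:int_AB} with a constant depending only on $a$ (and $n$ through nothing).

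The main obstacle is the $E_2$ term. Its bound depends on the precise meaning of the remainder $O(|k|^{-2})$ coming from the asymptotic expansion in \cite{ZZ}. I would first try to confirm that this remainder is the error term in the expansion of $\cos(\sqrt{k^2+a}\,s)$ minus $\cos(|k|s)$ minus its first-order correction. In that case Taylor's theorem gives the bound $C(a)\,(1+s^2)/k^2$, which is uniform on $[0,\pi]$. All other terms are handled with explicit summation.
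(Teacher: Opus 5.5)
Your proof is correct. The paper gives no argument for this lemma; it is quoted from \cite{ZZ}, so your term-by-term verification supplies what the paper omits.

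The estimates all check. Pairing $k\leftrightarrow -k$ and using the sawtooth identity gives $|E_1|\le a\pi/2$ uniformly in $\phi$. The bound $0\le\sqrt{k^2+a}-|k|\le a/(2|k|)$, combined with $\sum_{k\ge1}e^{-sk}/k=-\log(1-e^{-s})$, gives integrable majorants for $D_1$ and $D_2$ that do not depend on $\theta,\theta'$. The only cancellation you use is in the $k$-sum for $E_1$, and your argument shows the constant depends only on $a$.

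Your reading of the vague $O(|k|^{-2})$ in $E_2$ is the right one. Definitions \eqref{eq:A_a_def}--\eqref{eq:D2_def} are exactly the splittings of $\sum_{k\in\mathbb{Z}}e^{ik\phi}[\cos(\sqrt{k^2+a}\,s)-\cos(|k|s)]$ and $\sum_{k\in\mathbb{Z}}e^{ik\phi}[\sin(\sqrt{k^2+a}\,\pi)e^{-s\sqrt{k^2+a}}-\sin(|k|\pi)e^{-s|k|}]$. The $k=0$ terms produce $\cos(\sqrt a\,s)-1$ and $\sin(\sqrt a\,\pi)e^{-s\sqrt a}$. So $s^2$ times the $k$-th coefficient of $E_2$ is precisely the Taylor remainder $\cos(\sqrt{k^2+a}\,s)-\cos(|k|s)+\frac{as}{2|k|}\sin(|k|s)$.

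You can sharpen your stated bound $C(a)(1+s^2)k^{-2}$ on this remainder. Write it as $\bigl(\frac{a}{2|k|}-(\sqrt{k^2+a}-|k|)\bigr)s\sin(|k|s)$ plus a second-order term. Then use $0\le\frac{a}{2|k|}-(\sqrt{k^2+a}-|k|)\le\frac{a^2}{8|k|^3}$ and $|\sin(|k|s)|\le|k|s$. This shows the remainder is at most $\frac{a^2s^2}{4k^2}$, so the coefficients of $E_2$ are $O(k^{-2})$ uniformly for all $s\ge0$.

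The lemma does not need this refinement. It does, however, give the pointwise bound $|A_a(s;\theta,\theta')|\le Cs$ on $[0,\pi]$. The paper uses that bound without proof in Case 2 of the appendix.
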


We will also utilize the Gaussian boundedness for the heat kernel associated with $\mathcal{H}_a$, which is stated in the next proposition.

\begin{proposition}\cite[Proposition 3.1]{ZZ} \label{prop:heat_kernel}
Let $\mathcal{H}_{a}$ be the operator given in \eqref{La}. Suppose $H_{a}(t;X,X^{\prime})$ is the heat kernel of $e^{-t\mathcal{H}_{a}}$. Then it admits the decomposition
\begin{equation}
H_{a}(t;X,X^{\prime}) = H_{0}(t;X,X^{\prime}) + R_{a}(t;X,X^{\prime}), \label{eq:heat_kernel_decomp}
\end{equation}
where the principal part $H_{0}$ satisfies the Gaussian upper bound
\begin{equation}
\left|H_{0}(t;X,X^{\prime})\right| \leq C\,|t|^{-\frac{n+2}{2}} e^{-\frac{|X-X^{\prime}|^{2}}{4t}}, \label{eq:H0_bound}
\end{equation}
and the remainder term $R_{a}$ is controlled by
\begin{equation}
\left|R_{a}(t;X,X^{\prime})\right| \leq C\,|t|^{-\frac{n+2}{2}} e^{-\frac{|X-X^{\prime}|^{2}}{8t}}. \label{eq:Ra_bound}
\end{equation}
Here, $C$ is a positive constant.
\end{proposition}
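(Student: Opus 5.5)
The plan is to exploit the product structure of $\mathcal{H}_a$. Write $\mathcal{H}_a = L_a\otimes I + I\otimes(-\Delta_y)$ with $L_a=-\Delta_x+a|x|^{-2}$ on $\mathbb{R}^2$. The two summands commute, so
$$e^{-t\mathcal{H}_a}=e^{-tL_a}\otimes e^{t\Delta_y}.$$
Hence $H_a(t;X,X')=K_a(t;x,x')\,(4\pi t)^{-n/2}e^{-|y-y'|^2/4t}$, where $K_a$ is the heat kernel of $L_a$. Since the $y$-factor is already Gaussian, it suffices to prove a two-dimensional decomposition $K_a=K_0+\tilde R_a$ with
$$|K_0|\le Ct^{-1}e^{-|x-x'|^2/4t},\qquad |\tilde R_a|\le Ct^{-1}e^{-|x-x'|^2/8t}.$$
Multiplying these by the $y$-Gaussian and using $|X-X'|^2=|x-x'|^2+|y-y'|^2$ then gives \eqref{eq:H0_bound} and \eqref{eq:Ra_bound}, with the power $t^{-1-n/2}$.

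\emph{Bessel expansion.} In polar coordinates, expand in the angular modes $e^{ik\theta}$. On each mode $L_a$ acts as a Bessel operator of order $\nu_k=\sqrt{k^2+a}$, and the Weber formula gives
$$K_a(t;x,x')=\frac{1}{4\pi t}e^{-\frac{r^2+r'^2}{4t}}\sum_{k\in\mathbb{Z}}e^{ik(\theta-\theta')}I_{\nu_k}\Big(\frac{rr'}{2t}\Big).$$
Next insert Schläfli's integral representation
$$I_\nu(z)=\frac1\pi\int_0^\pi e^{z\cos s}\cos(\nu s)\,ds-\frac{\sin(\nu\pi)}{\pi}\int_0^\infty e^{-z\cosh s-\nu s}\,ds .$$
The exponents combine neatly with the prefactor:
$$-\frac{r^2+r'^2-2rr'\cos s}{4t}=-\frac{|\vec n^1_s|^2_{x}}{4t},\qquad -\frac{r^2+r'^2+2rr'\cosh s}{4t}=-\frac{|\vec n^2_s|^2_{x}}{4t},$$
where $|\cdot|_x$ denotes the first two components of the vectors in \eqref{equ:n}.

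\emph{Defining $K_0$ and the remainder.} Set $K_0$ to be the same expression with every $\nu_k$ replaced by $|k|$, which is exactly the free heat kernel $(4\pi t)^{-1}e^{-|x-x'|^2/4t}$. For it, the $\sin(|k|\pi)$ terms vanish, and the $k$-sum of $\cos(|k|s)e^{ik(\theta-\theta')}$ is a delta mass at $s=\pm(\theta-\theta')$. The remainder $\tilde R_a$ is the difference, with coefficients $\cos(\nu_k s)-\cos(|k|s)$ and $\sin(\nu_k\pi)e^{-\nu_k s}$. Reorganizing these $k$-sums exactly as in the derivation of Proposition \ref{prop:spect} (expanding $\nu_k-|k|=\tfrac{a}{2|k|}+O(|k|^{-2})$, with the $k=0$ mode giving $\cos(\sqrt a s)-1$ and $\sin(\sqrt a\pi)e^{-\sqrt a s}$) yields
$$\tilde R_a(t;x,x')=\frac{c}{t}\int_{|\theta-\theta'|}^\pi e^{-\frac{|\vec n^1_s|_x^2}{4t}}A_a(s;\theta,\theta')\,ds-\frac{c}{t}\int_0^\infty e^{-\frac{|\vec n^2_s|_x^2}{4t}}B_a(s;\theta,\theta')\,ds .$$

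\emph{Bounding the remainder.} For $s\ge|\theta-\theta'|$ we have $\cos s\le\cos(\theta-\theta')$, so $|\vec n^1_s|_x\ge|x-x'|$. In the second integral, $|\vec n^2_s|_x\ge r+r'\ge|x-x'|$. Both Gaussian factors are therefore at most $e^{-|x-x'|^2/4t}$, and Lemma \ref{lem:int_AB} gives
$$|\tilde R_a|\le Ct^{-1}e^{-|x-x'|^2/4t}\le Ct^{-1}e^{-|x-x'|^2/8t}.$$
The weaker constant $8t$ leaves room to absorb any harmless polynomial factors in $|x-x'|^2/t$ that appear if some terms are only controlled after integration by parts in $s$.

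\emph{Main obstacle.} The hard step is justifying that the truncation of the first integral to $s\ge|\theta-\theta'|$ is legitimate. Naively, $\int_0^\pi e^{z\cos s}(\cdots)\,ds$ only yields the decay $e^{-(r-r')^2/4t}$, which is not Gaussian in $|x-x'|$. To handle this I would treat the $k$-series as distributions in $s$, which are not absolutely convergent. I would then follow the Cheeger--Taylor contour argument used for Proposition \ref{prop:spect}: the part of the $s$-integral over $[0,|\theta-\theta'|]$ cancels against the free delta contribution and the Poisson-summation structure. This argument must be carried out so that the resulting kernels $A_a,B_a$ have bounds that are uniform in $\theta,\theta'$, which is what Lemma \ref{lem:int_AB} then supplies.
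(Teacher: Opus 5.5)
\textbf{Verdict: there is a genuine gap, at the step you yourself flag as the main obstacle.} The paper gives no proof of this proposition; it is quoted from \cite{ZZ}. The rest of your outline is sound. The tensor splitting, Weber's formula with $I_{\nu_k}$, Schl\"afli's representation, the identification of $K_0$ with the free kernel, and the bound on the $B_a$-term via $|\vec n^2_s|_x\ge r+r'\ge|x-x'|$ together with Lemma~\ref{lem:int_AB} are all correct.

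\textbf{Why the proposed mechanism does not work.} On $0\le s<|\theta-\theta'|$ there is nothing to ``cancel against'' the free delta. The free sum $\sum_k e^{ik\phi}\cos(|k|s)$ equals $\pi\delta(s-|\phi|)$ on $(0,\pi)$: it vanishes on that interval and only charges the endpoint. What you actually need is that the full difference
\[
A_a(s;\theta,\theta')=\sum_{k\in\mathbb{Z}}e^{ik\phi}\bigl(\cos(\nu_k s)-\cos(|k|s)\bigr),\qquad \phi=\theta-\theta'\in(-\pi,\pi],
\]
vanishes identically for $0\le s<|\phi|$. This cannot be seen term by term in $(\cos\sqrt a s-1)+sE_1+s^2E_2$; for instance, $sE_1=\frac a2 s^2$ exactly on that range. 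Appealing to the derivation of Proposition~\ref{prop:spect} does not supply it either: that derivation is not in the paper, and the lower limit $|\theta-\theta'|$ appearing there is precisely this support property.

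\textbf{The missing idea: finite propagation speed.} What is needed is finite propagation speed for the Klein--Gordon operator $\partial_s^2-\partial_\phi^2+a$ on the circle; this is the real content behind the Cheeger--Taylor argument. Your mention of Poisson summation is the right entry point once it is combined with Paley--Wiener.
\begin{itemize}
\item The symbol $m_s(\xi)=\cos(s\sqrt{\xi^2+a})$ is entire. Since $|\operatorname{Im}\sqrt{\zeta^2+a}|\le|\operatorname{Im}\zeta|$ for $a\ge0$, it satisfies $|m_s(\zeta)|\le e^{s|\operatorname{Im}\zeta|}$. Hence $\check m_s$ is supported in $[-s,s]$.
\item Poisson summation gives $\sum_k e^{ik\phi}m_s(k)=2\pi\sum_j\check m_s(\phi+2\pi j)$. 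For $s<\pi$, only $j=0$ meets $(-\pi,\pi)$.
\item Explicitly, differentiate in $s$ the Riemann function $\frac12J_0(\sqrt a\sqrt{s^2-\phi^2})\mathbf{1}_{\{|\phi|<s\}}$, which is the kernel of $\sin(s\sqrt{-\partial_\phi^2+a})/\sqrt{-\partial_\phi^2+a}$ on $\mathbb{R}$. Then subtract the case $a=0$. For $0<s<\pi$ this gives
\[
A_a(s;\theta,\theta')=-\pi\sqrt a\,s\,\frac{J_1\bigl(\sqrt a\sqrt{s^2-\phi^2}\bigr)}{\sqrt{s^2-\phi^2}}\,\mathbf{1}_{\{|\phi|<s\}}.
\]
\end{itemize}
So the only cancellation is between the two deltas at $s=|\phi|$, which is why $A_a$ is a genuine function. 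Moreover $A_a$ is supported in $s\ge|\theta-\theta'|$, and $|A_a|\le\frac{\pi a}{2}s$.

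\textbf{What this gives.} With this identity your argument closes, and it actually yields $|\tilde R_a|\le Ct^{-1}e^{-|x-x'|^2/4t}$. No integration by parts in $s$ is needed, so the $8t$ slack is unnecessary.

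\textbf{A shorter route to the stated bound.} Since $a>0$, the potential is nonnegative, so Trotter--Kato (or Feynman--Kac) domination gives
\[
0\le H_a(t;X,X')\le(4\pi t)^{-\frac{n+2}{2}}e^{-|X-X'|^2/4t}.
\]
One can then take $H_0$ to be the free kernel and $R_a=H_a-H_0$. That size bound is all the paper uses, namely for the low-frequency piece $V$. Your route additionally gives the explicit structure of the remainder.
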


Finally, as the central object of our study is the $L^p$ estimate of an analytic family of operators, and we define analytic family of operators as follows.

\begin{definition}
Let $z = z_0 + iz_1 \in \mathbb{C}$ with $z_0,z_1 \in \mathbb{R}$. We define the analytic family of operators $\{T_z\}$ by
\begin{equation}\label{eq:T_z_def}
T_{z} := (1 + \sqrt{\mathcal{H}_a})^{-z} e^{i\sqrt{\mathcal{H}_a}}.
\end{equation}
Using the spectral theorem, $T_z$ can be represented as an integral operator
\begin{equation}\label{eq:T_z_integral}
T_{z}f(X) = \int_{\mathbb{R}^{2+n}} K_{z}(X, X') f(X')\, dX',
\end{equation}
with the kernel given by
\begin{equation}\label{eq:K_z_kernel}
K_{z}(X, X') = \int_{0}^{\infty} (1 + \lambda)^{-z} e^{i\lambda}\, dE_{\sqrt{\mathcal{H}_{a}}}(\lambda; X, X').
\end{equation}
\end{definition}

The subsequent analysis will focus on establishing suitable bounds for the kernel $K_z(X, X')$, which will ultimately lead to the proof of our main $L^p$-boundedness result.

\section{Proof of Proposition \ref{prop:tz_bounds}}\label{proof-prop}
We now prove the main proposition of this paper, which is stated as follows.
\begin{proposition} \label{prop:tz_bounds}
Let $T_{z}$ be the analytic family of operators defined by $T_{z} = (1 + \sqrt{\mathcal{H}_a})^{-z}e^{i\sqrt{\mathcal{H}_a}}$. Then, there exists a constant $C > 0$ such that
\begin{align}
\|T_{z}f\|_{L^{2}(\mathbb{R}^{2+n})} &\leq C \|f\|_{L^{2}(\mathbb{R}^{2+n})}, \quad \Re(z) = 0, \label{eq:L2_bound} \\
\|T_{z}f\|_{L^{1}(\mathbb{R}^{2+n})} &\leq C \|f\|_{L^{1}(\mathbb{R}^{2+n})}, \quad \Re(z) = \frac{n+1}{2} + \varepsilon, \quad \varepsilon > 0. \label{eq:L1_bound}
\end{align}
\end{proposition}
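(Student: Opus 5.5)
The $L^2$ bound \eqref{eq:L2_bound} follows from the spectral theorem. For $\Re z=0$ the multiplier $(1+\lambda)^{-z}e^{i\lambda}$ has modulus one on $[0,\infty)$, so $T_z$ is unitary and has norm at most $1$. For \eqref{eq:L1_bound} it suffices, by Schur's test, to prove
\[
\sup_{X'}\int_{\mathbb{R}^{2+n}}|K_z(X,X')|\,dX<\infty,\qquad \Re z=\tfrac{n+1}{2}+\varepsilon ,
\]
with a constant that grows at most polynomially in $|\Im z|$. Such growth is harmless for Stein interpolation.

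\textbf{Frequency decomposition.} Take a smooth dyadic partition $1=\phi_0(\lambda)+\sum_{j\ge1}\phi(2^{-j}\lambda)$ with $\phi_0$ supported in $[0,2]$. This gives $K_z=K_z^{\rm low}+\sum_{j\ge1}K_z^{j}$.

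\textbf{Low frequencies.} Write $\phi_0(\lambda)(1+\lambda)^{-z}e^{i\lambda}=e^{-\lambda^2}\psi_z(\lambda)$ with $\psi_z$ compactly supported and smooth. Then $K_z^{\rm low}=\psi_z(\sqrt{\mathcal H_a})\,e^{-\mathcal H_a}$.
\begin{itemize}
\item $e^{-\mathcal H_a}$ has an integrable kernel by Proposition \ref{prop:heat_kernel}.
\item The kernel of $\psi_z(\sqrt{\mathcal H_a})$ should be shown to decay like $(1+|X-X'|)^{-N}$, or at least $(1+|X-X'|)^{-(n+2)-\delta}$. To see this, insert \eqref{kernel:sp} and integrate by parts in $\lambda$. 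The phases $e^{\pm i\lambda d}$ with $d\in\{|X-X'|,|\vec n_s^1|,|\vec n_s^2|\}$ give decay in $d$. Lemma \ref{lem:int_AB} then controls the $s$-integrals.
\item There is one point to watch: the lower limit $s=|\theta-\theta'|$ in the second term and the amplitude $a_\pm$ at small $\lambda d$ must be handled with the symbol bounds. Also, $|\vec n_s^j|\ge|X-X'|$ fails in general, so the decay must be read off as $|\vec n_s^1|\ge |(r-r',y-y')|$ and $|\vec n_s^2|\ge r+r'$. One then integrates in $X$ using these lower bounds.
\end{itemize}
Probably the cleaner route is the one the paper hints at: split low frequency into $U$ (oscillatory, handled by integration by parts) and $V$ (handled by the heat kernel).

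\textbf{High frequencies.} Each piece $K^j_z$ is a sum of three contributions $G_1,G_2,G_3$ coming from the three lines of \eqref{kernel:sp}. Each has the form
\[
\int_0^\infty \phi(2^{-j}\lambda)(1+\lambda)^{-z}\lambda^{n+1}a_\pm(\lambda d)e^{i\lambda(1\pm d)}\,d\lambda
\]
with $d$ one of the three distances, weighted in $s$ by $A_a$ or $B_a$. Integrating by parts in $\lambda$ gives the bound
\[
C\,2^{j(n+2-\Re z)}(1+2^jd)^{-\frac{n+1}{2}}\,(1+2^j|1-d|)^{-N}.
\]
So the kernel concentrates on the shell $d\approx1$ of thickness $2^{-j}$.

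For $G_1$ ($d=|X-X'|$) the shell has volume about $2^{-j}$. The $L^1_X$ norm is therefore about $2^{j(n+2-\Re z)}2^{-j(n+1)/2}2^{-j}=2^{-j\varepsilon}$, which is summable. For $d\gg1$ or $d\ll1$, the factor $2^{-jN}$ together with the $\lambda^{n+1}$ weight handles the remaining region.

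For $G_2,G_3$ one does the same after inserting the absolutely convergent $s$-integrals, using Fubini and Lemma \ref{lem:int_AB}. What is then needed is the following uniform-in-$s$ (and in $X'$) estimate: the $X$-measure of the set $\{X: \big||\vec n_s^{k}|-1\big|<2^{-j}\}$ is $O(2^{-j})$, and the weighted tails are summable. This is exactly the appendix-type lemma.

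\textbf{Main obstacle.} The hardest step is proving this uniform shell estimate for $G_2,G_3$. Here $|\vec n_s^1|^2=r^2+r'^2-2rr'\cos s+|y-y'|^2$ and $|\vec n_s^2|^2=r^2+r'^2+2rr'\cosh s+|y-y'|^2$. I would integrate in polar coordinates $dX=r\,dr\,d\theta\,dy$; the dependence on $\theta$ enters only through $A_a$, $B_a$ and the lower limit of the $s$-integral. The approach I would try:
\begin{itemize}
\item Change variables $(r,y)\mapsto(\rho,\omega)$ with $\rho=|\vec n_s^k|$, and check that the Jacobian factor $r$ is bounded on the shell $\rho\approx1$. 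This holds since $r\lesssim1+r'$, and for $\vec n^1_s$ one has $r\le \rho/|\sin(s/2)|$-type bounds near $s$ small.
\item Where this degenerates, decompose dyadically in $r/r'$ and in $s$, estimating each piece by $2^{-j}$ times a factor summable in the dyadic indices.
\end{itemize}
Degeneracy near $s\to0$ with $r'$ large is where I expect trouble. For that I would fall back on the extra decay factor $(1+2^jd)^{-(n+1)/2}$ and the $\varepsilon$ room to absorb logarithmic losses.
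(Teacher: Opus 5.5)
\textbf{There is a genuine gap, in the shell estimate for $G_2$.} Your architecture matches the paper's: unitarity on $\Re z=0$, Schur's test, the $U/V$ split at low frequency, integration by parts concentrating $G_1,G_2,G_3$ on the shells $|\vec n|\approx1$, then a shell lemma. Your dyadic shell count for $G_1$ is equivalent to the paper's interpolated bound $|1-d|^{-(1-\iota)}$. The gap is in the step you flag as the main obstacle. You correctly locate the trouble at $s\to0$ with $r'$ large, but the estimate you say is needed is false for $\vec n_s^1$.

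Since $|\vec n_s^1|$ does not depend on $\theta$, the set $\{X:\,\bigl||\vec n_s^1|-1\bigr|<2^{-j}\}$ contains every $\theta$. For $s\ll 1/r'$ it is essentially $\{(r,y):|(r-r',y-y')|\approx 1\}\times\mathbb{S}^1$. With $dX=r\,dr\,d\theta\,dy$ and $r\approx r'$ there, its measure is $\sim r'\,2^{-j}$, which is unbounded in $X'$. Your justification ``$r\lesssim 1+r'$'' is precisely this non-uniform factor. The fallback you propose cannot absorb it. On the shell $d\approx 1$, so $(1+2^jd)^{-(n+1)/2}\approx 2^{-j(n+1)/2}$ is already spent in producing the size $2^{j(1-\varepsilon)}$. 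Moreover the loss is a power of $r'$, not a logarithm in $j$ that the $\varepsilon$-room could swallow.

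\textbf{The missing idea.} What the paper's appendix supplies is that the shell forces $s\lesssim 1/r'$. Indeed $|\vec n_s^1|^2=(r-r'\cos s)^2+r'^2\sin^2 s+|y-y'|^2$, so $|\vec n_s^1|<2$ gives $r'\sin s<2$. For $r'>2$ the range $\cos s\le 0$ is excluded, because there $|\vec n_s^1|\ge r'$. One then needs a \emph{pointwise} bound on $A_a$ for small $s$. The paper uses $|A_a|\lesssim s$, from \eqref{eq:A_a_def}--\eqref{eq:G2_def}, so that $\int_0^{C/r'}(r'+2)|A_a|\,ds\lesssim1$. Lemma \ref{lem:int_AB} alone only gives $O(r')$. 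Alternatively, the lower limit $s\ge|\theta-\theta'|$ confines $\theta$ to an arc of length $\lesssim 1/r'$; this cancels the Jacobian and lets Lemma \ref{lem:int_AB} suffice. For $G_3$ no such issue arises, since $|\vec n_s^2|\ge r+r'$ forces $r,r'<2$ on the shell.

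\textbf{A related error.} Your assertion that $|\vec n_s^j|\ge|X-X'|$ fails is incorrect. For $|\theta-\theta'|\le s\le\pi$ one has $\cos s\le\cos(\theta-\theta')$, so $|\vec n_s^1|\ge|X-X'|$. Also $|\vec n_s^2|\ge((r+r')^2+|y-y'|^2)^{1/2}\ge|X-X'|$. This is what lets the paper bound $U_2$, $U_3$ and the off-shell pieces by integrable functions of $X-X'$. Your substitute $|\vec n_s^1|\ge|(r-r',y-y')|$ would reintroduce the factor $r'$ after integrating $r\,dr\,d\theta\,dy$.

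\textbf{Two smaller points.} The direct symbol-bound route for $\psi_z(\sqrt{\mathcal H_a})$ stalls at $d^{-(n+2)}$ from the region $\lambda d\lesssim1$. So the $U/V$ split is needed, not merely cleaner. Your polynomial-in-$|\Im z|$ bookkeeping is correct and more careful than the paper's.
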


\begin{proof}
By the unitarity of the operator $e^{i\sqrt{\mathcal{H}_a}}$, the estimate \eqref{eq:L2_bound} follows immediately. It therefore remains to prove \eqref{eq:L1_bound}.
From the Proposition \ref{prop:spect} and \eqref{eq:K_z_kernel}, the kernel of the operator $T_z$  can be expressed as follows:
\begin{equation}
\begin{split}
K_z(X,X')
&=\int_0^\infty (1+\lambda)^{-z} e^{i\lambda}\Big(\frac{\lambda^{n+1}}{\pi^{n+1}}\sum_{\pm}a_{\pm}(\lambda|(x,y)-(x',y')|)e^{\pm i\lambda|(x,y)-(x',y')|}\nonumber\\
&\qquad+\frac{\lambda^{n+1}}{\pi^{n+1}}\int_{|\theta-\theta'|}^\pi\sum_{\pm}a_{\pm}(\lambda|\vec{n}^1_s|)e^{\pm i\lambda|\vec{n}^1_s|}\times A_a(s; \theta,\theta') \, ds\\
&\qquad-\frac{\lambda^{n+1}}{\pi^{n+1}}\int^\infty_0\sum_{\pm}a_{\pm}(\lambda|\vec{n}^2_s|)e^{\pm i\lambda|\vec{n}^2_s|}\times B_a(s,\theta,\theta')\, ds\Big)d\lambda.
\end{split}
\end{equation}
Then, it is sufficient to verify
\begin{align}\label{est:L1-Kz}
\Big\|\int_{\mathbb{R}^{2+n}}K_z(X,X')f(X')dX'\Big\|_{L^1(\mathbb{R}^{2+n})}
\leq C \|f\|_{L^1(\mathbb{R}^{2+n})}.
\end{align}

Next, fix a bump function
$\beta\in C^\infty_0((1/2,2))$ satisfying
\begin{equation}\label{beta-d}
\sum_{j=-\infty}^\infty \beta(2^{-j} \lambda)=1, \quad \lambda>0,
\end{equation}
and define $\beta(0)=1,$ we set
$$\beta_{0}(\lambda)=\sum_{j\le 0} \beta(2^{-j}\lambda)
\in C^\infty_0([0,2)).$$
Then, the kernel $K_z(X,X')$ is decomposed into the sum of the following three terms, defined as follows:
\begin{align}
&U(X,X')=\int_0^\infty \Big((1+\lambda)^{-z} e^{i\lambda}-1\Big) \beta_0(\lambda)\,  dE_{\sqrt{\mathcal{H}_{{a}}}}(\lambda;X,X');\label{def:low-U}\\
&V(X,X')=\int_0^\infty \beta_0(\lambda) \,  dE_{\sqrt{\mathcal{H}_{{a}}}}(\lambda;X,X');\label{def:low-V}\\
&G(X,X')=\sum_{j\geq1}\int_0^\infty (1+\lambda)^{-z} e^{i\lambda} \beta_j(\lambda)\,  \label{def:high-G} dE_{\sqrt{\mathcal{H}_{{a}}}}(\lambda;X,X').
\end{align}
Applying \eqref{est:L1-Kz}, we are reduce to show
\begin{equation}\label{est:low-U}
\Big\|\int_{\mathbb{R}^{2+n}}U(X,X')f(X')dX'\Big\|_{L^1(\mathbb{R}^{2+n})}\leq C\|f\|_{L^1(\mathbb{R}^{2+n})},
\end{equation}
\begin{equation}\label{est:low-V}
\Big\|\int_{\mathbb{R}^{2+n}}V(X,X')f(X')dX'\Big\|_{L^1(\mathbb{R}^{2+n})}\leq C\|f\|_{L^1(\mathbb{R}^{2+n})},
\end{equation}
and
\begin{equation}\label{est:high-G}
\Big\|\int_{\mathbb{R}^{2+n}}G(X,X')f(X')dX'\Big\|_{L^1(\mathbb{R}^{2+n})}\leq C\|f\|_{L^1(\mathbb{R}^{2+n})}.
\end{equation}
First, we begin by proving the simpler case \eqref{est:low-V}. Let $\psi(x)=\beta_0(\sqrt x)$ and $\psi_e(x):=\psi(x)e^{2x}.$ Then $\psi_e$ is a $C_c^\infty$-function on $\mathbb{R}$ with support in $[0,4).$ Consequently, its Fourier transform $\hat\psi_e$ belongs to the Schwartz class. We write
\begin{align*}
\beta_0(\sqrt x)&=\psi(x)=e^{-2x}\psi_e(x)=e^{-2x}\int_{\mathbb{R}}
e^{ix\cdot\xi}\hat\psi_e(\xi)d\xi\\
&=e^{-x}\int_{\mathbb{R}}e^{-x(1-i\xi)}\hat\psi_e(\xi)d\xi.
\end{align*}
Therefore, via the functional calculus, we obtain
\begin{align*}
\beta_0(\sqrt{\mathcal{H}_{a}})=\psi(\mathcal{H}_{a})=e^{-\mathcal{H}_{a}}\int_{\mathbb{R}}
e^{-(1-i\xi)\mathcal{H}_{a}}\hat\psi_e(\xi)d\xi.
\end{align*}
By the proposition \ref{prop:heat_kernel}, we have the heat kernel estimates
\begin{align}\label{heat-kernel}
\Big|e^{-t\mathcal{H}_{a}}(X,X')\Big|\leq C |t|^{-\frac{2+n}{2}}e^{-\frac{|X-X'|^2}{8t}}.
\end{align}
Based on the heat kernel estimates \eqref{heat-kernel}, we have
\begin{align*}
|V(X,X')|=\big|\beta_0(\sqrt{\mathcal{H}_{a}})(X,X')\big|\leq& C\int_{\mathbb{R}^{2+n}}e^{-\frac{|X-Y|^2}{8}}e^{-\frac{|Y-X'|^2}
{8}}dY\int_{\mathbb{R}}|\hat\psi_e(\xi)|d\xi\\
\leq& C e^{-\frac{|X-X'|^2}{16}}\int_{\mathbb{R}^{2+n}} e^{-\frac{|X-Y|^2}{16}}e^{-\frac{|Y-X'|^2}
{16}}dY\\
\leq& C e^{-\frac{|X-X'|^2}{16}},
\end{align*}
which implies
\begin{align*}
\Big\|\int_{\mathbb{R}^{2+n}}V(X,X')f(X')dX'\Big
\|_{L^1(\mathbb{R}^{2+n})}\leq & C\int_{\mathbb{R}^{2+n}}\int_{\mathbb{R}^{2+n}} e^{-\frac{|X-X'|^2}{16}}|f(X')|dX'dX\\
\leq&C \int_{\mathbb{R}^{2+n}}|f(X')|dX'\int_{\mathbb{R}^{2+n}}
e^{-\frac{|X-X'|^2}{16}}dX\\
\leq &C\|f\|_{L^1(\mathbb{R}^{2+n})}.
\end{align*}
Thus, this yields \eqref{est:low-V}.

Next, we focus on the proof of \eqref{est:low-U}. Let $d=|X-X'|,$ and $U:=\frac{1}{\pi^{n+1}}(U_1+U_2+U_3).$ We consider
\begin{align*}
&U_1(X,X')=\int_0^\infty \Big((1+\lambda)^{-z} e^{i\lambda}-1\Big) \beta_0(\lambda)\lambda^{n+1} a_\pm(\lambda d)e^{\pm i\lambda d}d\lambda,\\
&U_2(X,X')=\int_0^\infty \int_{|\theta-\theta'|}^\pi\Big((1+\lambda)^{-z} e^{i\lambda}-1\Big) \beta_0(\lambda)\lambda^{n+1} a_\pm(\lambda |\vec{n}^1_s|)e^{\pm i\lambda |\vec{n}^1_s|} A_a(s; \theta,\theta') \, ds d\lambda,\\
&U_3(X,X')=\int_0^\infty \int^\infty_0\Big((1+\lambda)^{-z} e^{i\lambda}-1\Big) \beta_0(\lambda)\lambda^{n+1} a_\pm(\lambda |\vec{n}^2_s|)e^{\pm i\lambda |\vec{n}^2_s|} B_a(s,\theta,\theta')\, ds, d\lambda.
\end{align*}
On the one hand, we further decompose
\begin{equation}
\begin{split}
U_1(X,X')&= \int_0^\infty \Big((1+\lambda)^{-z}e^{i\lambda}-1\Big) e^{\pm i\lambda d} a_{\pm}(\lambda d) \beta_0(\lambda)\beta_0(\lambda d)\lambda^{n+1} d\lambda\\
&\quad+\sum_{j\geq 1} \int_0^\infty \Big((1+\lambda)^{-z}e^{i\lambda}-1\Big) e^{\pm i\lambda d} a_{\pm}(\lambda d) \beta_0(\lambda)\beta_j(\lambda d)\lambda^{n+1} d\lambda,
\end{split}
\end{equation}
then one has
\begin{equation}
\begin{split}
&\Big| \int_0^\infty \Big((1+\lambda)^{-z}e^{i\lambda}-1\Big) e^{\pm i\lambda d} a_{\pm}(\lambda d) \beta_0(\lambda)\beta_0(\lambda d)\lambda^{n+1} d\lambda\Big|\\
&\leq C \int_{\lambda<\min\{1,d^{-1}\}} \lambda^{n+2} d\lambda\leq C(1+d)^{-(n+3)}.
\end{split}
\end{equation}
On the other hand, employing integration by parts N-times, in which $N$ is an integer satisfying $N \geq \left\lceil\frac{n+5}{2}\right\rceil + 1$, it yields
\begin{equation}
\begin{split}
&\sum_{j\geq 1}\Big| \int_0^\infty \Big((1+\lambda)^{-z}e^{i\lambda}-1\Big) e^{\pm i\lambda d}  a_{\pm}(\lambda d) \beta_0(\lambda)\beta_j(\lambda d)\lambda^{n+1} d\lambda\Big|\\
&\leq C d^{-N}\sum_{j\geq 1}\int_{2^jd^{-1}\sim \lambda\leq 1}(1+\lambda d)^{-\frac{n+1}{2}} (1+\lambda)^{-\Re z}\lambda^{n+2-N} d\lambda\\
&\leq C d^{-\frac{n+1}{2}-N} \sum_{j\geq 1}\int_{2^j d^{-1}\sim \lambda\leq 1} \lambda^{\frac{n+3}2-N} d\lambda\\
&\leq C(1+d)^{-(n+3)},
\end{split}
\end{equation}
where we use the fact
\begin{align*}
(1+\lambda d)^{-\frac{n+1}{2}} (1+\lambda)^{-\Re z}\leq (\lambda d)^{-\frac{n+1}{2}},
\end{align*}
and
\begin{align*}
\lambda d\sim 2^j,~~d\sim 2^j/\lambda>1,~j\geq 1.
\end{align*}
Therefore, we obtain
$$|U_1(X,X')|\leq C (1+d)^{-(n+3)},$$
which implies
\begin{equation}\label{U1}
\begin{split}
&\Big\|\int_{\R^{2+n}} U_1(X,X') f(X') dX'\Big\|_{L^1(\R^{2+n})}\\
&\quad\leq C\int_{\R^{2+n}}\int_{\R^{2+n}} (1+|X-X'|)^{-(n+3)} |f(X')|dX'dX \leq C\|f\|_{L^1(\mathbb{R}^{2+n})}.
\end{split}
\end{equation}
By analogy with the estimation for $U_1$, the same method is applied to $U_2$ and $U_3$, yielding the following estimates
\begin{equation}
|U_2(X,X')|\leq C \int_{|\theta-\theta'|}^\pi (1+|\vec{n}^1_s|)^{-(n+3)}|A_a(s; \theta,\theta')|ds,
\end{equation}
and
\begin{equation}
|U_3(X,X')|\leq C \int^\infty_0 (1+|\vec{n}^2_s|)^{-(n+3)}|B_a(s; \theta,\theta')|ds.
\end{equation}
Thus, by Lemma \ref{lem:int_AB}, we can get
\begin{equation}\label{U2}
\begin{split}
&\Big\|\int_{\R^{2+n}} U_2(X,X') f(X') dX'\Big\|_{L^1(\R^{2+n})}\\
&\quad\leq C\int_{\R^{2+n}}\int_{\R^{2+n}} \int_{|\theta-\theta'|}^\pi(1+|\vec{n}^1_s|)^{-(n+3)} |A_a(s; \theta,\theta')|ds  |f(X')|dX'dX\\
&\quad \leq  \int_{\R^{2+n}}\int_{|\theta-\theta'|}^\pi\Big(\int_{\R^{2+n}} (1+|\vec{n}^1_s|)^{-(n+3)}dX\Big) |A_a(s; \theta,\theta')|ds |f(X')|dX' \\
&\quad \leq C\|f\|_{L^1(\mathbb{R}^{2+n})},
\end{split}
\end{equation}
and
\begin{equation}\label{U3}
\begin{split}
&\Big\|\int_{\R^{2+n}} U_3(X,X') f(X') dX'\Big\|_{L^1(\R^{2+n})}\\
&\quad\leq C\int_{\R^{2+n}}\int_{\R^{2+n}} \int^\infty_0(1+|\vec{n}^2_s|)^{-(n+3)} |B_a(s; \theta,\theta')|ds|f(X')|dX  dX'\leq C\|f\|_{L^1(\mathbb{R}^{2+n})}.
\end{split}
\end{equation}
In fact, from \eqref{equ:n}, we observe that
\begin{equation*}
\begin{split}
|\vec{n}^1_s|^2= r^2+r'^2-2rr'\cos s+|y-y'|^2\geq |(x-x', y-y')|^2, \, \text{for}\,|\theta-\theta'|\leq s\leq \pi
\end{split}
\end{equation*}
and
\begin{equation*}
\begin{split}
|\vec{n}^2_s|^2\geq (r+r')^2+|y-y'|^2\geq |(x-x', y-y')|^2.
\end{split}
\end{equation*}
Hence, it follows from \eqref{equ:int_AB} that
\begin{align*}
\int_{|\theta-\theta'|}^\pi&\Big(\int_{\R^{2+n}} (1+|\vec{n}^1_s|)^{-(n+3)}dX\Big) |A_a(s; \theta,\theta')|ds\\
&\leq \int_{|\theta-\theta'|}^\pi\Big(\int_{\R^{2+n}} (1+|X-X'|)^{-(n+3)}dX\Big) |A_a(s; \theta,\theta')|ds\leq C,
\end{align*}
Similarly, \eqref{U3} can be obtained. From \eqref{U1},\eqref{U2} and \eqref{U3}, then \eqref{est:low-U} follows.

Finally, we prove \eqref{est:high-G}.
We decompose $G(X,X')$ into the following three distinct terms
\begin{equation*}
G_1(X,X')=\sum_{j\geq1}\int_0^\infty (1+\lambda)^{-z} e^{i\lambda} \beta_j(\lambda)\lambda^{n+1}a_{\pm}(\lambda d)e^{\pm i \lambda d}d\lambda,
\end{equation*}
\begin{equation*}
G_2(X,X')=\sum_{j\geq1}\int_0^\infty \int_{|\theta-\theta'|}^\pi(1+\lambda)^{-z} e^{i\lambda} \beta_j(\lambda)\lambda^{n+1}a_{\pm}(\lambda |\vec{n}^1_s|)e^{\pm i \lambda |\vec{n}^1_s|}A_a(s; \theta,\theta') \, dsd\lambda,
\end{equation*}
and
\begin{equation*}
G_3(X,X')=\sum_{j\geq1}\int_0^\infty\int_0^\infty  (1+\lambda)^{-z} e^{i\lambda} \beta_j(\lambda)\lambda^{n+1}a_{\pm}(\lambda |\vec{n}^2_s|)e^{\pm i \lambda |\vec{n}^2_s|}B_a(s,\theta,\theta')\, dsd\lambda,
\end{equation*}
then one has $G=\frac{1}{\pi^{n+1}}(G_1+G_2+G_3)$. Next we begin by deriving
\begin{equation}\label{est:G_1}
\Big\|\int_{\mathbb{R}^{2+n}}G_1(X,X')f(X')dX'\Big\|_{L^1(\mathbb{R}^{2+n})}\leq C\|f\|_{L^1(\mathbb{R}^{2+n})}.
\end{equation}
For computational convenience, we denote
\begin{equation}
\begin{split}
G_1(X,X')&=\sum_{j\geq1}\int_0^\infty (1+\lambda)^{-z}e^{i\lambda(1\pm d)}  a_{\pm}(\lambda d) \beta(2^{-j}\lambda)\lambda^{n+1} d\lambda\\
&:=\sum_{j\geq1}I_j(X,X').
\end{split}
\end{equation}

To prove \eqref{est:G_1}, we estimate $G_1$ by considering two cases as follows, depending on the distance of $d=|X-X'|$ from 1.

{\bf Case 1: $||X-X'|-1|\geq\delta$ ($0<\delta<\frac 1 2$).} By integration by parts N-times (N large enough), we can establish
\begin{equation}\label{Gh1}
\begin{split}
|G_1(X,X')|&\leq C(1\pm d)^{-N}\sum_{j\geq1}\int_0^\infty (1+\lambda)^{-\Re z}(1+\lambda d)^{-\frac{n+1}{2}}\beta(2^{-j}\lambda)\lambda^{n+1-N} d\lambda\\
&\leq C(1\pm d)^{-N}.
\end{split}
\end{equation}

{\bf Case 2: $||X-X'|-1|<\delta.$} In this case, it implies $|X-X'|\sim1.$ Utilizing integration by parts one-time, one has
\begin{equation}\label{I1}
\begin{split}
|I_j(X,X')|&=\Big|\int_0^\infty (1+\lambda)^{-z}e^{i\lambda(1\pm d)}  a_{\pm}(\lambda d) \beta(2^{-j}\lambda) \lambda^{n+1}d\lambda\Big|\\
&\leq C(1\pm d)^{-1}\int_{\lambda\sim 2^{j}}\lambda^{-1}(1+\lambda)^{-\Re z}(1+\lambda d)^{-\frac{n+1}{2}}\beta(2^{-j}\lambda)\lambda^{n+1} d\lambda\\
&\leq C(1\pm d)^{-1}\int_{\lambda\sim 2^{j}}(1+\lambda)^{-1-\varepsilon}d\lambda\\
&\leq C 2^{-\varepsilon j}(1\pm d)^{-1}.
\end{split}
\end{equation}
Through  a direct calculation, it can be obtained that
\begin{equation}\label{I2}
\begin{split}
|I_j(X,X')|&=\Big|\int_0^\infty (1+\lambda)^{-z}e^{i\lambda(1\pm d)}  a_{\pm}(\lambda d) \beta(2^{-j}\lambda)\lambda^{n+1} d\lambda\Big|\\
&\leq \int_{\lambda\sim 2^{j}}
(1+\lambda)^{-\frac{n+1} 2-\varepsilon}(1+\lambda)^{-\frac {n+1} 2}\lambda^{n+1} d\lambda\leq C 2^{(1-\varepsilon)j}.
\end{split}
\end{equation}
Combining \eqref{I1} and \eqref{I2}, a straightforward calculation yields
\begin{equation}\label{I3}
\begin{split}
I_j(X,X')\leq C\Big(2^{-\varepsilon j}(1\pm|X-X'|)^{-1}\Big)^{1-\iota}
\Big(2^{(1-\varepsilon)j}\Big)^{\iota}
\end{split}
\end{equation}
where we choose $\iota=\frac{\varepsilon}{2}$ and $0<\varepsilon<1$, hence
\begin{equation}\label{Gh2}
\begin{split}
|G_1(X,X')|&= C\sum_{j\geq1}\Big(2^{-\varepsilon j}(1\pm|X-X'|)^{-1}\Big)^{1-\iota}
\Big(2^{(1-\varepsilon)j}\Big)^\iota\\
&\leq C\sum_{j\geq1}2^{(-\varepsilon+\iota)j}
(1\pm|X-X'|)^{-(1-\iota)}\\
&\leq C (1\pm|X-X'|)^{-(1-\iota)}.
\end{split}
\end{equation}
Therefore, by \eqref{Gh1} and \eqref{Gh2}, which implies
\begin{equation}\label{G1}
\begin{split}
\Big\|&\int_{\R^{2+n}} G_1(X,X') f(X') dX'\Big\|_{L^1(\R^{2+n})}\leq \int_{\R^{2+n}}\int_{\R^{2+n}}|G_1(X,X')|dX|f(X')|dX'\\
&\leq\int_{\R^{2+n}}\Big(\int_{||X-X'|-1|\geq\delta}+
\int_{||X-X'|-1|<\delta}\Big)|G_1(X,X')|dX|f(X')|dX'
\\
&\leq\int_{\R^{2+n}}\Big(\int_{||X-X'|-1|\geq\delta}(1\pm |X-X'|)^{-N}dX\\
&\qquad +\int_{||X-X'|-1|<\delta}
(1\pm|X-X'|)^{-(1-\iota)}dX\Big)|f(X')|dX'\\
&\leq C\|f\|_{L^1(\mathbb{R}^{2+n})}.
\end{split}
\end{equation}
So far, we have completed the proof of \eqref{est:G_1}.

Next, similar to $G_1$, we turn our attention to the terms $G_2$ and $G_3$. To simplify the notation, we denote
\begin{equation}\label{G_2}
\begin{split}
G_2(X,X')&=\sum_{k\geq1}\int_0^\infty \int_{|\theta-\theta'|}^\pi(1+\lambda)^{-z} e^{i\lambda(1\pm|\vec{n}^1_s|)}  a_{\pm}(\lambda|\vec{n}^1_s|) \beta(2^{-j}\lambda)\lambda^{n+1} A_a(s; \theta,\theta') \, dsd\lambda\\
&:=\sum_{k\geq1}\int_{|\theta-\theta'|}^\pi J_k(s,X,X')A_a(s; \theta,\theta') \, ds
\end{split}
\end{equation}
where
\begin{equation}
J_k(s,X,X')=\int_0^\infty(1+\lambda)^{-z} e^{i\lambda(1\pm|\vec{n}^1_s|)}  a_{\pm}(\lambda|\vec{n}^1_s|) \beta(2^{-j}\lambda)\lambda^{n+1}d\lambda,
\end{equation}
and we write
\begin{equation}\label{Jk}
J_k(X,X')=\int_{|\theta-\theta'|}^\pi J_k(s,X,X')A_a(s; \theta,\theta') \, ds.
\end{equation}

To estimate $G_2$, we distinguish two cases based on the size of $\big||\vec{n}^1_s|-1\big|.$

{\bf Case 1: $||\vec{n}^1_s|-1|\geq\delta$ ($0<\delta<\frac 1 2$).} Applying integration by parts N-times once more (N large enough), we can obtain
\begin{equation}\label{G2-1}
\begin{split}
&|G_2(X,X')|\\
&\leq C\int_0^\infty \int_{|\theta-\theta'|}^\pi |1\pm|\vec{n}^1_s||^{-N}\sum_{j\geq1}  (1+\lambda)^{-\Re z}(1+\lambda|\vec{n}^1_s|)^{-\frac{n+1}{2}}
\beta(2^{-j}\lambda)\lambda^{n+1-N} |A_a(s; \theta,\theta') |\, dsd\lambda \\
&\leq C\int_{|\theta-\theta'|}^\pi|1\pm|\vec{n}^1_s||^{-N}|A_a(s; \theta,\theta') |\, ds.
\end{split}
\end{equation}

{\bf Case 2: $||\vec{n}^1_s|-1|<\delta $.} It implies $(1+\lambda|\vec{n}^1_s|)^{-1/2}\leq C (1+\lambda)^{-\frac 1 2}.$
A further integration by part yields
\begin{equation}\label{J1}
\begin{split}
&|J_k(s,X,X')|\\
&=\Big|\int_0^\infty (1+\lambda)^{-z} e^{i\lambda(1\pm|\vec{n}^1_s|)}  a_{\pm}(\lambda|\vec{n}^1_s|) \beta(2^{-j}\lambda)\lambda^{n+1} d\lambda\Big|\\
&\leq C|1\pm|\vec{n}^1_s||^{-1}\int_{\lambda\sim 2^{j}}\lambda^{-1}(1+\lambda)^{-\Re z}(1+\lambda|\vec{n}^1_s|)^{-\frac{n+1}{2}}\beta(2^{-j}\lambda)\lambda^{n+1} d\lambda \\
&\leq C|1\pm|\vec{n}^1_s||^{-1}\int_{\lambda\sim 2^{j}}(1+\lambda)^{-\frac{n+1} 2-\varepsilon}(1+\lambda)^{-\frac {n+1} 2}\lambda^n d\lambda \\
&\leq C 2^{-\varepsilon j}|1\pm|\vec{n}^1_s||^{-1}.
\end{split}
\end{equation}
Then, through  a direct calculation, it can be obtained that
\begin{equation}\label{J2}
\begin{split}
|J_k(s,X,X')|&=\Big|\int_0^\infty (1+\lambda)^{-z} e^{i\lambda(1\pm|\vec{n}^1_s|)}  a_{\pm}(\lambda|\vec{n}^1_s|) \beta(2^{-j}\lambda)\lambda^{n+1} d\lambda\Big|\\
&\leq \int_{\lambda\sim 2^{j}}
(1+\lambda)^{-\frac{n+1} 2-\varepsilon}(1+\lambda)^{-\frac {n+1} 2}\lambda^{n+1} d\lambda\\
&\leq C2^{(1-\varepsilon)j}.
\end{split}
\end{equation}
Combining \eqref{G_2}, \eqref{J1} and \eqref{J2}, we have
\begin{equation}\label{J3}
\begin{split}
J_k(s,X,X')&\leq C\Big(2^{-\varepsilon j}|1\pm|\vec{n}^1_s||^{-1}\Big)^{1-\iota}
\Big(2^{(1-\varepsilon)j}\Big)^{\iota}\\
&\leq C2^{(-\varepsilon+\iota)j}
|1\pm|\vec{n}^1_s||^{-(1-\iota)}
\end{split}
\end{equation}
by choosing $\iota=\frac{\varepsilon}{2}$ and $0<\varepsilon<1$.
From \eqref{Jk} and \eqref{J3}, we deduce
\begin{align}\label{G2-2}
|G_2(X,X')|\leq C \int_{|\theta-\theta'|}^\pi|1\pm|\vec{n}^1_s||^{-(1-\iota)}|A_a(s; \theta,\theta') |\, ds.
\end{align}
Therefore, by \eqref{G2-1}, \eqref{G2-2}, Lemma \ref{lem:int_AB} and \eqref{est-n1} in appendix \ref{app}, which implies
\begin{equation}\label{G2}
\begin{split}
\Big\|&\int_{\R^{2+n}} G_2(X,X') f(X') dX'\Big\|_{L^1(\R^{2+n})}\leq \int_{\R^{2+n}}\int_{\R^{2+n}}|G_2(X,X')| dX|f(X')|dX' \\
& \leq\int_{\R^{2+n}}\Big(\int_{||\vec{n}^1_s|-1|\geq\delta}+
\int_{||\vec{n}^1_s|-1|<\delta}\Big)|G_2(X,X')| dX|f(X')|dX'\\
&\leq C\int_{\R^{2+n}}\int_{||\vec{n}^1_s|-1|\geq\delta}\int_{|\theta-\theta'|}^\pi
|1\pm|\vec{n}^1_s||^{-N}|A_a(s; \theta,\theta')| \, dsdX|f(X')|dX'\\
&\qquad +\int_{\R^{2+n}}
\int_{||\vec{n}^1_s|-1|<\delta}\int_{|\theta-\theta'|}^\pi
|1\pm|\vec{n}^1_s||^{-(1-\iota)}|A_a(s; \theta,\theta') | \, dsdX|f(X')|dX'\\
&\leq C\|f\|_{L^1(\mathbb{R}^{2+n})}.
\end{split}
\end{equation}
Finally, repeating the above calculation again, the estimate for $G_3$ can be obtained. We now present the proof in detail.
When $||\vec{n}^2_s|-1|\geq\delta,$
we then have
\begin{equation}\label{G3-1}
\begin{split}
&|G_3(X,X')|\\
&\leq C\int_0^\infty \int_0^\infty |1\pm|\vec{n}^2_s||^{-N}\sum_{j\geq1}  (1+\lambda)^{-\Re z}(1+\lambda|\vec{n}^2_s|)^{-\frac{n+1}{2}}
\beta(2^{-j}\lambda)\lambda^{n+1-N} |B_a(s; \theta,\theta') |\, dsd\lambda \\
&\leq C\int_0^\infty |1\pm|\vec{n}^2_s||^{-N}|B_a(s; \theta,\theta') |\, ds;
\end{split}
\end{equation}
when $||\vec{n}^2_s|-1|<\delta$, one can demonstrate
\begin{equation}\label{G3-2}
\begin{split}
|G_3(X,X')|&\leq C\sum_{j\geq1}\int_0^\infty2^{(-\varepsilon+\iota)j}
|1\pm|\vec{n}^2_s||^{-(1-\iota)}|B_{\alpha}(s,\theta_1,\theta_2)|ds\\
&\leq C \int_0^\infty|1\pm|\vec{n}^2_s||^{-(1-\iota)}|B_{\alpha}(s,\theta_1,\theta_2)|ds.
\end{split}
\end{equation}
Therefore, given \eqref{G3-1} and \eqref{G3-2}, which implies
\begin{equation}\label{G3}
\begin{split}
\Big\|&\int_{\R^{2+n}} G_3(X,X') f(X') dX'\Big\|_{L^1(\R^{2+n})}\leq \int_{\R^{2+n}}\int_{\R^{2+n}}|G_3(X,X')| dX|f(X')|dX' \\
& \leq\int_{\R^{2+n}}\Big(\int_{||\vec{n}^2_s|-1|\geq\delta}+
\int_{||\vec{n}^2_s|-1|<\delta}\Big)|G_3(X,X')| dX|f(X')|dX'\\
&\leq C\int_{\R^{2+n}}\int_{||\vec{n}^2_s|-1|\geq\delta}\int_0^\infty
|1\pm|\vec{n}^2_s||^{-N}|B_a(s; \theta,\theta')| \, dsdX|f(X')|dX'\\
&\qquad +\int_{\R^{2+n}}
\int_{||\vec{n}^2_s|-1|<\delta}\int_0^\infty
|1\pm|\vec{n}^2_s||^{-(1-\iota)}|B_a(s; \theta,\theta') | \, dsdX|f(X')|dX'\\
&\leq C\|f\|_{L^1(\mathbb{R}^{2+n})}.
\end{split}
\end{equation}
In the last inequality, we have used Lemma \ref{lem:int_AB} and the inequality \eqref{est-n2} from appendix \ref{app}.
From \eqref{G1}, \eqref{G2} and \eqref{G3}, this yields \eqref{est:high-G}. We have thus established Proposition \ref{prop:tz_bounds}.
\end{proof}

\section{Proof of Theorem \ref{thm:main}}\label{proof-thm:main}
This section is devoted to the proof of Theorem \ref{thm:main}. Our approach relies on Proposition \ref{prop:tz_bounds}, analytic interpolation, and duality argument. To this end, the key step is to prove that the operator family $T_z = (1 + \sqrt{\mathcal{H}_a})^{-z} e^{i\sqrt{\mathcal{H}_a}}$ is bounded on the appropriate $L^p$ spaces.

\begin{proof}
From Proposition \ref{prop:tz_bounds}, we have the following uniform estimates for the analytic family $ T_z $:
When $\Re(z) = 0,$
  \begin{equation}\label{TzL2}
  \|T_z f\|_{L^2(\mathbb{R}^{2+n})} \leq C \|f\|_{L^2(\mathbb{R}^{2+n})};
  \end{equation}
When $\Re(z) = \frac{n+1}{2} + \varepsilon ,$
  \begin{equation}\label{TzL1}
  \|T_z f\|_{L^1(\mathbb{R}^{2+n})} \leq C \|f\|_{L^1(\mathbb{R}^{2+n})}.
  \end{equation}

By duality argument, the estimate \eqref{TzL1} implies that $T_z$ is also bounded from $L^\infty$ to $L^\infty$ when $\Re(z) = \frac{n+1}{2} + \varepsilon .$

We now apply Stein's complex interpolation theorem (see \cite{S}) and consider the following two interpolation scenarios. First, interpolate between the $L^2 \to L^2$ boundedness at $\Re(z) = 0$ and the $L^1 \to L^1$ boundedness at $\Re(z) = \frac{n+1}{2} + \varepsilon.$ For $z_0 = \theta(\frac{n+1}{2} + \varepsilon)$ with $0 \leq \theta \leq 1,$ the interpolation yields,
\begin{equation}\label{Inter1}
\|T_{z_0}\|_{L^p \to L^p} \leq C \quad \text{for } z_0 = \theta\left(\frac{n+1}{2} + \varepsilon\right),\,\frac{1}{p} =\frac{1 + \theta}{2}.
\end{equation}
Similarly, interpolate between the $L^2 \to L^2 $ boundedness at $\Re(z) = 0$ and the $L^\infty \to L^\infty $ boundedness at $\Re(z) = \frac{n+1}{2} + \varepsilon .$ With the same $z_0 = \theta(\frac{n+1}{2} + \varepsilon),$ we have,
\begin{equation}\label{Inter2}
\|T_{z_0}\|_{L^p \to L^p} \leq C \quad \text{for } z_0 = \theta\left(\frac{n+1}{2} + \varepsilon\right),\,\frac{1}{p} =\frac{1 - \theta}{2}.
\end{equation}
Combining \eqref{Inter1} and \eqref{Inter2}, we find that $T_{z_0}$ is bounded on $ L^p$ for all $p$ satisfying
\begin{equation}
\left| \frac{1}{p} - \frac{1}{2} \right| \leq \frac{z_0}{n+1+2\varepsilon}.
\end{equation}

To obtain the estimate for real $\gamma > 0,$ we employ a scaling argument. Consider the modified operator family
\begin{equation*}
\tilde{T}_z = (1 + \sqrt{\mathcal{H}_a})^{-z} e^{it\sqrt{\mathcal{H}_a}}.
\end{equation*}
Then the same interpolation procedure applied to $ \tilde{T}_z $ yields
\begin{equation*}
\|\tilde{T}_\gamma f\|_{L^p} \leq C(1 + t)^\gamma \|f\|_{L^p}.
\end{equation*}

Since the operators $(1 + \sqrt{\mathcal{H}_a})^{-\gamma} $ and $ (1 + t\sqrt{\mathcal{H}_a})^{-\gamma} $ are equivalent in \( L^p \)-norm up to a constant factor, we conclude
\begin{equation}
\left\| (1 + \sqrt{\mathcal{H}_a})^{-\gamma} e^{it\sqrt{\mathcal{H}_a}} f \right\|_{L^p(\mathbb{R}^{2+n})} \leq C(p,\gamma)(1 + t)^\gamma \|f\|_{L^p(\mathbb{R}^{2+n})}
\end{equation}
for all $p$ satisfying $\left| \frac{1}{p} - \frac{1}{2} \right| \leq\frac{\gamma}{n+1+2\varepsilon}$ with $\varepsilon > 0.$

This completes the proof of Theorem \ref{thm:main}.
\end{proof}\vspace{0.2cm}

\appendix
\section{Integrability of Singular Integral Kernels  }\label{app}

In this appendix, we establish a technical lemma that is used in the proof of Proposition \ref{prop:tz_bounds} in Section \ref{proof-prop}.

\begin{lemma}\label{lem:est}
Let $x=(r\cos\theta,r\sin\theta)\in\mathbb{R}^{2}$, $x^{\prime}=(r^{\prime}\cos\theta^{\prime},r^{\prime}\sin\theta^{\prime})\in\mathbb{R}^{2}$ and $y,y^{\prime}\in\mathbb{R}^{n}$. Denote $X=(x,y)$ and $X'=(x',y')$.
For $0<\iota<1,$ there exists a constant $C>0$ independent of $X'$ such that
\begin{align}\label{est-n1}
\int_{\mathbb{R}^{2+n}}\int_{|\theta-\theta'|}^\pi \mathbf{1}_{\{ |\vec{n}_s^1|<2 \}} \;
\bigl| 1-|\vec{n}_s^1| \bigr|^{-(1-\iota)} |A_a(s;\theta,\theta')| \, ds dX\leq C ,
\end{align}
and
\begin{align}\label{est-n2}
\int_{\mathbb{R}^{2+n}}\int_0^\infty\mathbf{1}_{\{ |\vec{n}_s^2|<2 \}}
|1-|\vec{n}^2_s||^{-(1-\iota)}|B_a(s; \theta,\theta') | \, ds dX\leq C,
\end{align}
where $\vec{n}_{s}^{1}$ and $\vec{n}_{s}^{2}$  are as defined in \eqref{equ:n}
and  $dX = r\,dr\,d\theta\,dy$.
\end{lemma}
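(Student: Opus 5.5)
The plan is to use Fubini to bring the $s$-integral outside the $X$-integral. Then both \eqref{est-n1} and \eqref{est-n2} follow from Lemma \ref{lem:int_AB} once we have a bound
\begin{equation*}
\sup_{s,\theta'}\ \int_{\mathbb{R}^{2+n}} \mathbf{1}_{\{|\vec{n}_s^j|<2\}}\,\bigl|1-|\vec{n}_s^j|\bigr|^{-(1-\iota)}\,dX\le C,\qquad j=1,2,
\end{equation*}
uniform in $s$ and in $X'$. There is one subtlety for $j=1$: the lower limit $|\theta-\theta'|$ of the $s$-integral depends on $\theta$. So we first write the region as $\{(\theta,s):|\theta-\theta'|\le s\le\pi\}$ and enlarge it to $\theta\in[0,2\pi)$, $s\in[0,\pi]$. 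This is harmless since the integrand is nonnegative. After this, $A_a(s;\theta,\theta')$ still depends on $\theta$, but only through $\theta-\theta'$, and Lemma \ref{lem:int_AB} is uniform in $\theta,\theta'$. So we bound $\int_0^\pi|A_a|\,ds\le C$ for each fixed $\theta$ and then integrate $\theta$ over a set of measure $2\pi$. The order we commit to is: for fixed $\theta$ and $s$, integrate over $(r,y)$; then integrate over $s$ against $|A_a|$; finally integrate over $\theta$.

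\textbf{Inner integral.} Fix $s,\theta$ and work in the variables $(r,y)$ with measure $r\,dr\,dy$, translating $y'$ away. For $j=1$, set $\rho=|\vec n^1_s|$, so that $\rho^2=r^2+r'^2-2rr'\cos s+|y-y'|^2$. For fixed $r$, we pass to polar coordinates in $w=y-y'\in\mathbb{R}^n$ and change variables $|w|\mapsto\rho$. The constraint $\rho<2$ forces $r\le r'+2$ and also $|r-r'|\le 2$ (since $r^2+r'^2-2rr'\cos s\ge (r-r')^2$). On the range $r\in[\max(0,r'-2),r'+2]$, the $w$-integral is
\begin{equation*}
\int_{c(r)}^{2}\bigl|1-\rho\bigr|^{-(1-\iota)}\,\rho\,(\rho^2-c^2)^{\frac{n-2}{2}}\,d\rho,\qquad c(r)^2=r^2+r'^2-2rr'\cos s .
\end{equation*}
When $n\ge2$ this is bounded uniformly, since $\rho\le2$ and $|1-\rho|^{-(1-\iota)}$ is integrable. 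The case $n=1$ is the delicate one: the factor $(\rho^2-c^2)^{-1/2}$ is singular, and it can meet the singularity at $\rho=1$ when $c\approx1$. We plan to handle this by a H\"older or rearrangement argument. The combined singularity $|1-\rho|^{-(1-\iota)}(\rho-c)^{-1/2}$ is integrable in $\rho$ with a bound of the form $C(1+|1-c|^{-(1/2-\iota)})$, and the latter is in turn integrable in $r$ because $c$ is a nondegenerate function of $r$ away from a measure-zero set. We can also simply swap the roles and integrate in $r$ first when that is better conditioned.

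\textbf{The weight $r$ and the case of large $r'$.} Then comes the $r$-integration with weight $r$. Here we have $r\le r'+2$, and for large $r'$ the interval of admissible $r$ has length $\le4$ but the weight is $r\sim r'$. This is the main obstacle: a naive bound grows like $r'$, which is not uniform in $X'$. The remedy is to use the angular restriction more carefully. The $\theta$-measure is fine, but $\rho<2$ with $r,r'$ large forces $2rr'(1-\cos s)\le4$, i.e.\ $s\lesssim (rr')^{-1/2}$. Since $s\ge|\theta-\theta'|$, this confines $\theta$ to an arc of length $\lesssim 1/r'$, which compensates the weight $r$. So for $r'\ge 4$ we should not enlarge to $\theta\in[0,2\pi)$ blindly. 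Instead we keep the constraint $|\theta-\theta'|\le s\lesssim 1/r'$ and integrate $\theta$ first, gaining $1/r'$. For $s$ in this small range we also need $|A_a(s)|$ to be integrable, which Lemma \ref{lem:int_AB} gives. For $r'\le4$ everything is on a bounded region and the crude bound suffices.

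\textbf{The case $j=2$.} Here $|\vec n^2_s|\ge r+r'$, so $\rho<2$ forces $r,r'<2$. The weight $r$ is then bounded, and the same inner $(\rho,y)$ computation applies, uniformly in $s\in(0,\infty)$. We finish by integrating $|B_a|$ over $s$ via Lemma \ref{lem:int_AB} and $\theta$ over $[0,2\pi)$. For $r'\ge2$ the integrand vanishes identically.
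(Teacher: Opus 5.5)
Your proposal is correct, but it differs from the paper's proof in two places.

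\textbf{The inner $(r,y)$-integral.} You take polar coordinates in $y-y'$ only. This leaves the factor $(\rho^2-c^2)^{(n-2)/2}$ and forces a separate Beta-type argument when $n=1$. The paper instead completes the square with $v=r-r'\cos s$ and $Q=r'^2\sin^2 s$, so that $|\vec n^1_s|^2=v^2+|y-y'|^2+Q$ is radial in $(v,y-y')\in\mathbb{R}^{n+1}$. The Jacobian then satisfies $\rho^n\,d\rho\lesssim d(\rho^2)$, which absorbs the singularity for all $n\ge1$ at once. For $\vec n^2_s$ the paper does split the cases: $n=1$ via a planar polar change, and $n\ge2$ via dyadic shells in $|\vec n^2_s|-1$. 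The latter is essentially your $y$-polar bound.

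\textbf{The case $r'>4$.} The paper integrates $\theta$ over the full circle. It beats the weight $r\sim r'$ with the pointwise bound $|A_a|\le Cs$ (imported from \cite{ZZ}) on the range $s\lesssim 1/r'$. You instead use the lower limit $s\ge|\theta-\theta'|$ to confine $\theta$ to an arc of length $O(1/r')$, and need only Lemma \ref{lem:int_AB}. This is more self-contained. However, the bookkeeping must be $\int_{|\theta-\theta'|\lesssim 1/r'}\bigl(\int_0^\pi|A_a|\,ds\bigr)\,d\theta\lesssim 1/r'$. The lemma controls $|A_a|$ in $s$, not in $\theta$, so your phrase ``integrate $\theta$ first'' should be read this way.

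\textbf{One justification needs fixing.} For $n=1$, the uniform $r$-integrability of $|1-c(r)|^{-(1/2-\iota)}$ does not follow from ``nondegeneracy off a null set''. The tangential configuration $r'\sin s=1$, a double root of $c(r)=1$, is genuinely degenerate. What works is the following. Set $u=r-r'\cos s$ and $\kappa=1-r'^2\sin^2 s$. Then $|1-c|\ge\frac13|u^2-\kappa|$ on $\{c<2\}$, and on that set $|u|<2$. Moreover $\sup_{\kappa}\int_{|u|<2}|u^2-\kappa|^{-\alpha}\,du<\infty$ for $\alpha=\frac12-\iota<\frac12$, because $|u^2-\kappa|\ge\min(|u-\sqrt\kappa|,|u+\sqrt\kappa|)^2$ when $\kappa>0$. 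First reduce $\iota$ below $\frac12$; this is harmless, since $|1-\rho|\le1$ on the region, so the integrand only grows as $\iota$ decreases.
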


\begin{proof}We first prove \eqref{est-n1}.
Fix $X'=(r',\theta',y')$ and exchange the order of integration, set
\begin{align*}
\Phi_1(X') = \int_{|\theta-\theta'|}^{\pi}
\Bigl[ \int_{\{X:|\vec{n}_s^1|<2\}}
\bigl|1-|\vec{n}_s^1|\bigr|^{-(1-\iota)} \, dX \Bigr]
|A_a(s;\theta,\theta')| \, ds ,
\end{align*}
and denote the inner integral by
\begin{align}\label{zeta1}
\zeta_1(s,X')=\int_{\{X:|\vec{n}_s^1|<2\}}
\bigl|1-|\vec{n}_s^1|\bigr|^{-(1-\iota)} \, dX .
\end{align}
This shows that inequality \eqref{est-n1} holds if and only if $\Phi_1(X')\leq C$. Write $u=r-r', t=y-y', \sigma=1-\cos s\in[0,2]$, $dX = r\,dr\,d\theta\,dy=(r'+u)du\,d\theta\,dt$, and
\[
|\vec{n}_s^1| = \sqrt{u^2+|t|^2 + 2(r'+u)r'\sigma}.
\]
Therefore, the condition $|\vec{n}_s^1|<2$ becomes
\begin{align}\label{ineq1-u,t}
u^2+|t|^2 + 2(r'+u)r'\sigma < 4.
\end{align}
We denote the domain $\Omega=\{(u,t):u^2+|t|^2 + 2(r'+u)r'\sigma < 4\},$ then, \eqref{zeta1} can be written as
\begin{equation}\label{zeta1-u}
\begin{split}
\zeta_1(s,X')&=2\pi\int_{\Omega}|1-\sqrt{u^2+|t|^2 + 2(r'+u)r'\sigma}|^{-(1-\iota)}(r'+u)dudt.
\end{split}
\end{equation}
From \eqref{ineq1-u,t} we obtain $u^2+|t|^2 < 4$, so $(u,t)$ lies in a ball of radius $2$ in $\mathbb{R}^{n+1}$.
The singularity $|1-|\vec{n}_s^1||^{-(1-\iota)}$ occurs when $|\vec{n}_s^1|=1$, i.e.
\[
u^2+|t|^2 = 1 - 2(r'+u)r'\sigma.
\]
The right-hand side is bounded, so the singularity lies within a bounded region.

Let $v=u+r'\sigma$ and $Q:=r'^2\sigma(2-\sigma)\in[0,4]$, and then by using the polar coordinate $\rho=\sqrt{v^2+|t|^2}$ with $\rho<2$, we have
\begin{equation*}
\begin{split} \zeta_1(s,X')&\leq C\int_{v^2+|t|^2<4-Q}\Big|v^2+|t|^2-(1-Q)\Big|^{-(1-\iota)} (r'+2)dvdt\\
&\leq C \omega_n\int_{0}^{\infty}\mathbf{1}_{\{\rho<\sqrt{4-Q}\}}
\Big|\rho^2-(1-Q)\Big|^{-(1-\iota)}(r'+2)\rho^nd\rho
\end{split}
\end{equation*}
provided that \eqref{ineq1-u,t} and \eqref{zeta1-u}.
In the first inequality above, we have applied the relation $|1-|\vec{n}_s^1||\geq \frac{1}{4}|1-|\vec{n}_s^1|^2|$ due to the fact that $|\vec{n}_s^1|<2$. For convenience, here we write
$$\zeta_1(s,X')\leq C\cdot RHS$$
with
\begin{equation}\label{zeta1-rho}
\begin{split}RHS = \int_{0}^{\infty}\mathbf{1}_{\{\rho<\sqrt{4-Q}\}}
\Big|\rho^2-(1-Q)\Big|^{-(1-\iota)}(r'+2)\rho^nd\rho.
\end{split}
\end{equation}
Now we reduce to prove the following integral
\begin{equation*}
\begin{split}
\int_{|\theta-\theta'|}^{\pi}RHS|A_a|\,ds
\end{split}
\end{equation*}
is bounded. Due to the presence of the factor $(r'+2)$ in \eqref{zeta1-rho}, a more refined estimate is required. Therefore, we proceed by considering two cases according to the size of $r'$: $r' \le 4$ and $r' > 4$.

{\bf Case 1: $r'\leq4.$}
From \eqref{zeta1-rho}, the integrand has a (integrable) singularity at $\rho = \sqrt{|1-Q|}$.
For $n\geq1$, let $\xi=\rho^2-(1-Q),$ we obtain
\begin{align*}
RHS\leq& C\int_{0}^{\infty}\mathbf{1}_{\{\rho^2<4-Q\}}
\Big|\rho^2-(1-Q)\Big|^{-(1-\iota)}d\rho^2\\
\leq &C\int_{-1}^4|\xi|^{-(1-\iota)}d\xi\leq C,
\end{align*}
where the last inequality follows because the exponent satisfies $-(1-\iota) > -1$ and the integration interval is bounded. Thus, combining Lemma \ref{lem:int_AB} with $s\geq |\theta-\theta'|$, we establish
\[
\Phi_1(X') \leq C\int_{|\theta-\theta'|}^{\pi}RHS|A_a|\,ds\le C \int_0^\pi |A_a|\, ds \le  C .
\]

{\bf Case 2: $r' > 4$.} Condition \eqref{ineq1-u,t} forces $r'^2\sigma(2-\sigma) \leq 4$ for large $r'$, hence
\begin{equation}\label{r'large}
\sigma \leq \frac{4}{r'^2}, \quad s \leq \frac{2}{r'} \qquad \text{or}\qquad 2-\sigma \leq \frac{4}{r'^2}, \quad \pi-s \leq \frac{2}{r'} \quad (\text{exclude})
\end{equation}
However, the second case can be ruled out, because if the inequality $\pi-s \leq \frac{2}{r'}$ hold for sufficiently large $r'$, it would imply $\cos s \le 0$, and therefore $|\vec{n}_s^1|^2 \ge r^2 + r'^2$, which contradicts the given condition $r'<|\vec{n}_s^1| < 2$.

Thus, for large $r'$, the effective interval of $s$ has length of order $1/r'$, and consequently $s \leq \frac{2}{r'} $.
Then using \eqref{r'large} together with \eqref{zeta1-rho}, we attain
\begin{equation*}
\begin{split}
\Phi_1(X')& \leq C \int_0^\pi \int_{0}^{\infty}\mathbf{1}_{\{s\leq \frac{2}{r'}\}}\mathbf{1}_{\{\rho<\sqrt{4-Q}\}}
\Big|\rho^2-(1-Q)\Big|^{-(1-\iota)}\rho^n  \, (r'+2)d\rho |A_a|ds.
\end{split}
\end{equation*}
Combining the expression of $A_a=A_a(s;\theta,\theta')$ given in \eqref{eq:A_a_def} with the bounds for $E_1,E_2$ provided in \eqref{eq:G1_def} and \eqref{eq:G2_def} (for details, please refer to \cite{ZZ}.), it directly follows that $|A_a|\leq Cs$ for large $r'.$ Hence,
\begin{equation}\label{r'ge2}
\begin{split}
\Phi_1(X')& \leq C \int_0^\pi \int_{0}^{\infty}\mathbf{1}_{\{s\leq \frac{2}{r'}\}}\mathbf{1}_{\{\rho<\sqrt{4-Q}\}}
\Big|\rho^2-(1-Q)\Big|^{-(1-\iota)}\rho^n  \, (r'+2)d\rho sds\\
&\leq C \int_0^\pi \int_{0}^{\infty}\mathbf{1}_{\{s\leq \frac{2}{r'}\}}\mathbf{1}_{\{\rho<\sqrt{4-Q}\}}
\Big|\rho^2-(1-Q)\Big|^{-(1-\iota)}\rho^n  \, \frac{(r'+2)}{r'}d\rho ds\leq C.
\end{split}
\end{equation}
Similar to the estimate for \eqref{zeta1-rho} above, we can also establish that \eqref{r'ge2} holds here. Now, we complete the proof of \eqref{est-n1}.\vspace{0.2cm}

We now turn to the estimate \eqref{est-n2}. Because the integrability of the singularity differs between the cases $n=1$ and $n \ge 2$, we handle them separately. For $n=1$, the boundedness of the integral is obtained directly; for $n \ge 2$, a dyadic decomposition in a neighborhood of the singularity is employed to refine the estimate.
First, we recall
\[
\vec n_s^2 = \bigl( r+r',\; \sqrt{2rr'(\cosh s-1)},\; y-y' \bigr).
\]
Hence $|\vec n_s^2|<2$ implies
\begin{align}\label{n2-long}
r^2+r'^2+2rr'\cosh s + |y-y'|^2<4.
\end{align}
Interchanging the order of integration gives
\[
\Phi_2(X') = \int_{0}^{\infty} \zeta_2(s,X')\,
                |B_a(s;\theta,\theta')|\,ds,
\]
with
\[
\zeta_2(s,X') = \int_{\{X:|\vec n_s^2|<2\}}
                |1-|\vec n_s^2||^{-(1-\iota)}\,dX,
\qquad dX = r\,dr\,d\theta\,dy.
\]

{\bf Case 1: $n=1.$} On the one hand, by condition \eqref{n2-long}, we can acquire
\[
r^2+|y-y'|^2 < 4 - r'^2 - 2rr'\cosh s \le 4 - r'^2,
\]
which implies $r,r'<2$ and $|y-y'|<2$. Hence, the integration region is bounded.
On the other hand, the integration exhibits a singularity when $|\vec n_s^2|=1$, i.e. when
\[
r^2+r'^2+2rr'\cosh s + |y-y'|^2 = 1.
\]

Thus, making the change of variables $v=r+r'\cosh s$, $z=y-y'$, and $\rho = v^2+z^2$,
we set $\eta = \rho^2 - \bigl(r'^2\cosh^2 s + 1\bigr)$.
Condition \eqref{n2-long} implies $\eta \in [-1,3)$.
We then estimate $\zeta_2$ as follows:
\begin{align*}
&\zeta_2(s,X')\\
&\leq C\int_{\rho<\sqrt{4+r'^2\cosh^2 s }}\Big|(r+r'\cosh s)^2+|y-y'|^2-(r'^2\cosh^2 s+1)\Big|^{-(1-\iota)}drdy\\
&\leq C\int \Big(\mathbf{1}_{\rho<\sqrt{\frac 1 2+ r'^2\cosh^2 s}}+\mathbf{1}_{\sqrt{2+r'^2\cosh^2 s }<\rho<\sqrt{4+r'^2\cosh^2 s }}\Big)|\rho^2-(r'^2\cosh^2 s+1)|^{-(1-\iota)} \rho d\rho\\
&\qquad +C\int_{\sqrt{\frac 1 2+ r'^2\cosh^2 s}<\rho<\sqrt{2+r'^2\cosh^2 s }}|\rho^2-(r'^2\cosh^2 s+1)|^{-(1-\iota)} \rho d\rho\\
&\leq C\Big(\int_{-\frac 1 2}^1 |\eta|^{-(1-\iota)}d\eta+\int_{-1}^{-\frac 1 2}|\eta|^{-(1-\iota)}d\eta+\int_1^3|\eta|^{-(1-\iota)}d\eta\Big)\leq C.
\end{align*}
We concluded that the three integrals above are bounded by utilizing the fact  $-(1-\iota) > -1$. Consequently, we attain
\[
\zeta_2(s,X') \le C \quad \text{for all } s\ge0 \text{ and all } X' \text{ with } n=1,
\]
where $C$ is independent of $s$ and $X'$.

Finally, employing Lemma \ref{lem:int_AB},
we obtain
\[
\Phi_2(X') \le C \int_{0}^{\infty} |B_a(s;\theta,\theta')|\,ds
           \le C.
\]

{\bf Case 2: $n\ge2.$} Set $g(r,r',s)=r^2+r'^2+2rr'\cosh s,$ then we have $|\vec n_s^2|^2=g(r,r',s)+|y-y'|^2.$
Thus, we divide the region $|\vec{n}_{s}^{2}|<2$  into two cases: $1<|\vec{n}_{s}^{2}|<2$ and $|\vec{n}_{s}^{2}|<1.$
Without loss of generality, we now apply a dyadic decomposition to the first case $1<|\vec{n}_{s}^{2}|<2$.
For $j\geq1,$ let
$$
2^{-j}\leq|\vec{n}_{s}^{2}|-1\leq2^{-j+1},
$$
which implies
\begin{align}\label{ineq-g}
(1+2^{-j})^2-g\leq|y-y'|^2\leq (1+2^{-j+1})^2-g,
\end{align}
where $g(r_1,r_2,s)=r^2+r'^2+2rr'\cosh s\geq0.$

Hence for $0<\iota<1$, by Lemma \ref{lem:int_AB}, we obtain that ($dX=dxdy$)
\begin{align*}
&\int_{\mathbb{R}^{2+n}}\int_{0}^\infty \mathbf{1}_{\{ 1<|\vec{n}_s^2|<2 \}} \;
\bigl| 1-|\vec{n}_s^2| \bigr|^{-(1-\iota)} |B_a(s;\theta,\theta')| \, ds dX\\
&\leq \sum_{j=1}^\infty \int_{0}^\infty\int_{g<(1+2^{-j+1})^2}
\int_{(1+2^{-j})^2-g\leq|y-y'|^2\leq (1+2^{-j+1})^2-g}(2^{-j})^{-(1-\iota)}dydx |B_a|ds\\
&\leq \sum_{j=1}^\infty \int_{0}^\infty\int_{g<(1+2^{-j+1})^2}2^{j(1-\iota)}
\Big([(1+2^{-j+1})^2-g]^{\frac n 2}-[\max\{(1+2^{-j})^2-g,0\}]^{\frac n 2}\Big)dx |B_a|ds\\
&\leq C\sum_{j=1}^\infty \int_{0}^\infty\int_{r<2} 2^{-\iota j}|B_a|dxds\leq C.
\end{align*}
Here we have utilized this inequality
\begin{align}\label{bound-g}
[(1+2^{-j+1})^2-g]^{\frac n 2}-[\max\{(1+2^{-j})^2-g,0\}]^{\frac n 2}\leq C2^{-j}.
\end{align}
In fact, from \eqref{ineq-g}, we have $g<(1+2^{-j+1})^2.$ If additionally $g>(1+2^{-j})^2,$ then $\max\{(1+2^{-j})^2-g,0\}=0$. Applying the mean-value theorem, we gain
$$(1+2^{-j+1})^2-(1+2^{-j})^2= 2(1+\kappa)(2^{-j+1}-2^{-j})\leq C2^{-j}$$
with $2^{-j}<\kappa<2^{-j+1},$  which implies
$$
[(1+2^{-j+1})^2-g]^{\frac n 2}\leq [(1+2^{-j+1})^2-(1+2^{-j})^2]^{\frac n 2}\leq C2^{-\frac n 2 j}\leq C 2^{-j}.
$$
Moreover, if $g<(1+2^{-j})^2,$ then we obtain
\begin{align*}
&[(1+2^{-j+1})^2-g]^{\frac n 2}-[\max\{(1+2^{-j})^2-g,0\}]^{\frac n 2}\\
&=[(1+2^{-j+1})^2-g]^{\frac n 2}-[(1+2^{-j})^2-g]^{\frac n 2}\\
&\leq [(1+2^{-j+1})^2-(1+2^{-j})^2]\cdot n\varsigma[(1+\varsigma)^2-g]^{\frac{n-2}{2}}\qquad 2^{-j}<\varsigma<2^{-j+1}\\
&\leq C2^{-j}.
\end{align*}
Consequently, \eqref{bound-g} follows. For another case $|\vec{n}_{s}^{2}|<1,$ we set $2^{-j}\leq1-|\vec{n}_{s}^{2}|\leq2^{-j+1},$ the proof is analogous.
Now, we have completed the proof of Lemma \ref{lem:est}.
\end{proof}

\subsection*{Acknowledgements}
C. B. Xu was partially supported by National Natural Science Foundation of China (No.12401296) and  Qinghai Natural Science Foundation (No.2024-ZJ-976). J. Zhang was supported by National Natural Science Foundation of China(12531005) and Beijing Natural Science Foundation(1242011).

\begin{center}

\end{center}

\end{document}